\documentclass[12pt]{article}
\usepackage{amsmath,amsthm, amssymb}
\usepackage{amscd,array}
\usepackage[all]{xy}
 \sloppy

\title{Canonical matrices
of bilinear and
sesquilinear forms\footnotetext{This is the authors' version of a work that was accepted for publication in Linear Algebra and its Applications (2007), doi:10.1016/j.laa.2007.07.023.}}

\DeclareMathOperator{\End}{End}
\DeclareMathOperator{\ind}{ind}
\DeclareMathOperator{\charact}{char}
\DeclareMathOperator{\diagg}{diag}

\author{
Roger A. Horn%
\\Department of Mathematics, University of
Utah\\ Salt Lake City,
Utah 84112-0090,
rhorn@math.utah.edu
\and
Vladimir V. Sergeichuk%
\thanks{The research
was done while this
author was visiting
the University of Utah
supported by NSF grant
DMS-0070503 and the
University of S\~ao
Paulo supported by
FAPESP, processo
05/59407-6.}
\\
Institute of
Mathematics,
Tereshchenkivska 3,
Kiev,
Ukraine,\\sergeich@imath.kiev.ua}
\date{}

\begin{document}

\renewcommand{\le}{\leqslant}
\renewcommand{\ge}{\geqslant}

\newcommand{\eprf}{\hfill\hbox{\qedsymbol}}

\newcommand{\diag}{\,\diagdown\,}

\newcommand{\lin}{\,\frac{}{\quad}\,}

\newcommand{\is}{\stackrel
{\text{\raisebox{-1ex}{$\sim\
\;$}}}{\to}}

\newcommand{\ci}{
\begin{picture}(6,6)
\put(3,3){\circle*{3}}
\end{picture}}

\newcommand{\hhat}
{\text{\raisebox{-0.5ex}{\,$\widehat{}$}}\
}

\newcommand{\ddd}{
\text{\begin{picture}(12,8)
\put(-2,-4){$\cdot$}
\put(3,0){$\cdot$}
\put(8,4){$\cdot$}
\end{picture}}}

\newtheorem{theorem}{Theorem}[section]
\newtheorem{lemma}{Lemma}[section]
\newtheorem{corollary}{Corollary}[section]

\theoremstyle{remark}
\newtheorem{remark}{Remark}[section]

\maketitle
\begin{abstract}
Canonical matrices are
given for
\begin{itemize}
  \item
bilinear forms
over an algebraically
closed or real closed
field;  
  \item 
sesquilinear
forms over an
algebraically closed
field and over real
quaternions with any
nonidentity
involution; and
  
  \item 
sesquilinear
forms over a field
$\mathbb F$ of
characteristic
different from $2$
with involution
(possibly, the
identity) up to
classification of
Hermitian forms over
finite extensions of
$\mathbb F$; the
canonical matrices are
based on any given set
of canonical matrices
for similarity over
$\mathbb F$.
  
\end{itemize}
A method for reducing
the problem of
classifying systems of
forms and linear
mappings to the
problem of classifying
systems of linear
mappings is used to
construct the
canonical matrices.
This method has its
origins in
representation theory
and was devised in
[V.V. Sergeichuk, {\it
Math. USSR-Izv.} 31
(1988) 481--501].

{\it AMS
classification:}
15A21, 15A63.

{\it Keywords:}
Canonical matrices;
Bilinear and
sesquilinear forms;
Congruence and
*congruence; Quivers
and algebras with
involution.
 \end{abstract}

\section{Introduction}
\label{s0}

We give canonical
matrices of bilinear
forms over an
algebraically closed
or real closed field
(familiar examples are
$\mathbb C$ and
$\mathbb R$), and of
sesquilinear forms
over an algebraically
closed field and over
$\mathbb
P$-quaternions
($\mathbb P$ is a real
closed field) with
respect to any
nonidentity
involution. We also
give canonical
matrices of
sesquilinear forms
over a field $\mathbb
F$ of characteristic
different from $2$
with involution
(possibly, the
identity) up to
classification of
Hermitian forms over
finite extensions of
$\mathbb F$; the
canonical matrices are
based on any given set
of canonical matrices
for similarity.

Bilinear and
sesquilinear forms
over a field $\mathbb
F$ of characteristic
different from $2$
have been classified
by Gabriel, Riehm, and
Shrader-Frechette.
Gabriel \cite{gab2}
reduced the problem of
classifying bilinear
forms to the
nondegenerate case.
Riehm \cite{rie}
assigned to each
nondegenerate bilinear
form $\cal A\colon
V\times V\to \mathbb
F$ a linear mapping
$A\colon V\to V$ and a
finite sequence
$\varphi^{\cal A}_1,\,
\varphi^{\cal
A}_2,\dots$ consisting
of
$\varepsilon_i$-Hermitian
forms $\varphi^{\cal
A}_i$ over finite
extensions of $\mathbb
F$ and proved that two
nondegenerate bilinear
forms $\cal A$ and
$\cal B$ are
equivalent if and only
if the corresponding
mappings $A$ and $B$
are similar and each
form $\varphi^{\cal
A}_i$ is equivalent to
$\varphi^{\cal B}_i$
(results of this kind
were earlier obtained
by Williamson
\cite{wil}). This
reduction was studied
in \cite{sch} and was
improved and extended
to sesquilinear forms
by Riehm and
Shrader-Frechette
\cite{rie1}. But this
classification of
forms was not
expressed in terms of
canonical matrices, so
it is difficult to
use.

Using Riehm's
reduction, Corbas and
Williams \cite{cor}
obtained canonical
forms of nonsingular
matrices under
congruence over an
algebraically closed
field of
characteristic
different from 2
(their list of
nonsingular canonical
matrices contains an
inaccuracy, which can
be easily fixed; see
\cite[p.\,1013]{hor-ser_can}).
Thompson \cite{thom}
gave canonical pairs
of symmetric or
skew-symmetric
matrices over $\mathbb
C$ and $\mathbb R$
under simultaneous
congruence. Since any
square complex or real
matrix can be
expressed uniquely as
the sum of a symmetric
and a skew-symmetric
matrix, Thompson's
canonical pairs lead
to canonical matrices
for congruence; they
are studied in
\cite{lee-wei}. We
construct canonical
matrices that are much
simpler than the ones
in \cite{cor,lee-wei}.

We construct canonical
matrices of bilinear
and sesquilinear forms
by using the technique
for reducing the
problem of classifying
systems of forms and
linear mappings to the
problem of classifying
systems of linear
mappings that was
devised by Roiter
\cite{roi} and the
second author
\cite{ser_first,
ser_disch,ser_izv}. A
system of forms and
linear mappings
satisfying some
relations is given as
a representation of a
partially ordered
graph $P$ with
relations: each vertex
corresponds to a
vector space, each
arrow or nonoriented
edge corresponds to a
linear mapping or a
bilinear/sesquilinear
form (see Section
\ref{sub_pos1}). The
problem of classifying
such representations
over a field or skew
field $\mathbb F$ of
characteristic
different from $2$
reduces to the
problems of
classifying
\begin{itemize}
\parskip=-3pt
  \item
representations of
some quiver
$\underline P$ with
relations and
involution (in fact,
representations of a
finite dimensional
algebra with
involution) over
$\mathbb F$, and
  \item
Hermitian forms over
fields or skew fields
that are finite
extensions of the
center of $\mathbb F$.
\end{itemize}

The corresponding
reduction theorem was
extended in
\cite{ser_izv} to the
problem of classifying
selfadjoint
representations of a
linear category with
involution and in
\cite{ser_sym} to the
problem of classifying
symmetric
representations of an
algebra with
involution. Similar
theorems were proved
by Quebbermann,
Scharlau, and Schulte
\cite{que-scha,w.schar}
for additive
categories with
quadratic or Hermitian
forms on objects, and
by Derksen, Shmelkin,
and Weyman
\cite{der-wey,shme}
for generalizations of
quivers involving
linear groups.

Canonical matrices of
\begin{itemize}
\item[(i)] bilinear
and sesquilinear
forms,
  \item[(ii)]
pairs of symmetric or
skew-symmetric forms,
and pairs of Hermitian
forms, and
  \item[(iii)]
isometric or
selfadjoint operators
on a space with scalar
product given by a
nondegenerate
symmetric,
skew-symmetric, or
Hermitian form
\end{itemize}
were constructed in
\cite{ser_disch,
ser_izv} by this
technique over a field
$\mathbb F$ of
characteristic
different from 2  up
to classification of
Hermitian forms over
fields that are finite
extensions of $\mathbb
F$. Thus, the
canonical matrices of
(i)--(iii) over
$\mathbb C$ and
$\mathbb R$ follow
from the construction
in \cite{ser_disch,
ser_izv} since
classifications of
Hermitian forms over
these fields are
known.

The canonical matrices
of bilinear and
sesquilinear forms
over an algebraically
closed field of
characteristic
different from 2 and
over a real closed
field given in
\cite[Theorem
3]{ser_izv}, and the
canonical matrices of
bilinear forms over an
algebraically closed
field of
characteristic $2$
given in \cite{ser0}
are based on the
Frobenius canonical
form for similarity.
In this article we
simplify them by using
the Jordan canonical
form. Such a
simplification was
given by the authors
in \cite{hor-ser} for
canonical matrices of
bilinear and
sesquilinear forms
over $\mathbb C$; a
direct proof that the
matrices from
\cite{hor-ser} are
canonical is given in
\cite{hor-ser_regul,
hor-ser_can};
applications of these
canonical matrices
were obtained in
\cite{dok_iso,dok-zha_jor,
dok-zha_rat,
hor-ser_can,
hor-ser_unit}. We also
construct canonical
matrices of
sesquilinear forms
over quaternions; they
were given in
\cite{ser_quat} with
incorrect signs for
the indecomposable
direct summands; see
Remark \ref{rema}.
Analogous results for
canonical matrices of
isometric operators
have been obtained in
\cite{ser_iso}.
\medskip

The paper is organized
as follows. In Section
\ref{s_intre} we
formulate our main
results: Theorem
\ref{t1.1} about
canonical matrices of
bilinear and
sesquilinear forms
over an algebraically
or real closed field
and over quaternions,
and Theorem
\ref{Theorem 5} about
canonical matrices  of
bilinear and
sesquilinear forms
over any field
$\mathbb F$ of
characteristic not $2$
with an involution, up
to classification of
Hermitian forms. In
Section \ref{sub_pos1}
we give a brief
exposition of the
technique for reducing
the problem of
classifying systems of
forms and linear
mappings to the
problem of classifying
systems of linear
mappings. We use it in
Sections \ref{s_pro}
and \ref{secmat}, in
which we prove
Theorems \ref{t1.1}
and \ref{Theorem 5}.

\section{Canonical
matrices for
congruence and
*congruence}
\label{s_intre}

Let $\mathbb F$ be a
field or skew field
with involution
$a\mapsto \bar{a}$,
i.e., a bijection
$\mathbb F\to \mathbb
F$ satisfying
$\overline{a+b}=\bar
a+ \bar b,$
$\overline{ab}=\bar b
\bar a,$ and
$\bar{\bar a}=a.$
Thus, the involution
may be the identity
only if $\mathbb F$ is
a field.

 For any
matrix $A=[a_{ij}]$
over $\mathbb F$, we
write $
A^*:=\bar{A}^T=[\bar{a}_{ji}].
$ Matrices
$A,B\in{\mathbb
F}^{n\times n}$ are
said to be
*{\it\!congruent}\/
over $\mathbb F$ if
there is a nonsingular
$S\in{\mathbb
F}^{n\times n}$ such
that $S^*AS=B$. If
$S^TAS=B$, then the
matrices $A$ and $B$
are called {\it
congruent}. The
transformations of
congruence ($A\mapsto
S^TAS$) and
*congruence ($A\mapsto
S^*AS$) are associated
with the bilinear form
$x^TAy$ and the
sesquilinear form
$x^*Ay$, respectively.

\subsection{Canonical
matrices over an
algebraically or real
closed field and over
quaternions}
\label{sub_01}

In this section we
give canonical
matrices for
congruence over:
\begin{itemize}
  \item
an algebraically
closed field, and

  \item
a {\it real closed
field}---i.e., a field
${\mathbb P}$ whose
algebraic closure
${\mathbb K}$ has a
finite degree $\ne 1$
(that is,
$1<\dim_{\mathbb
P}{\mathbb
K}<\infty$).
\end{itemize}
We also give canonical
matrices for
*congruence over:
\begin{itemize}
  \item
an algebraically
closed field with
nonidentity
involution, and

  \item the {\it skew field
of\/ $\mathbb
P$-quaternions}
\begin{equation*}\label{1ya}
 {\mathbb
 H}=\{a+bi+cj+dk\,|\,a,b,c,d\in\mathbb
 P\},
\end{equation*}
in which $\mathbb P$
is a real closed
field,
$i^2=j^2=k^2=-1$,
$ij=k=-ji,$
$jk=i=-kj,$ and
$ki=j=-ik.$
\end{itemize}
We consider only two
involutions on
$\mathbb H$:
\emph{quaternionic
conjugation}
\begin{equation}\label{ne}
 a+bi+cj+dk\ \longmapsto\
 a-bi-cj-dk, \qquad
a,b,c,d\in\mathbb P,
\end{equation}
and \emph{quaternionic
semiconjugation}
\begin{equation}
\label{nen}
 a+bi+cj+dk\ \longmapsto\
a-bi+cj+dk, \qquad
a,b,c,d\in\mathbb P,
\end{equation}
because if an
involution on $\mathbb
H$ is not quaternionic
conjugation, then it
becomes quaternionic
semiconjugation after
a suitable reselection
of the imaginary units
$i,j,k$; see
\cite[Lemma
2.2]{ser_iso}.

There is a natural
one-to-one
correspondence
\begin{equation*}\label{1ye}
\left\{\parbox{5cm}{$\
$algebraically closed fields\\
with nonidentity
involution}\right\}
\quad\longleftrightarrow
\quad
\bigl\{\text{real
closed fields}\bigr\}
\end{equation*}
sending an
algebraically closed
field with nonidentity
involution to its
fixed field. This
follows from our next
lemma, in which we
collect known results
about such fields.

\begin{lemma}\label{l00}
{\rm(a)} Let\/
$\mathbb P$ be a real
closed field and\/ let
$\mathbb K$ be its
algebraic closure.
Then $\charact{\mathbb
P}=0$ and
\begin{equation}\label{1pp}
\mathbb K={\mathbb
P}+{\mathbb P}i,\qquad
i^2=-1.
\end{equation}
The field\/ ${\mathbb
P}$ has a unique
linear ordering $\le$
such that
\begin{equation*}\label{slr}
\text{$a>0$ and\,
$b>0$}
 \quad\Longrightarrow\quad
\text{$a+b>0$ and\,
$ab>0$}.
\end{equation*}
The positive elements
of\/ $\mathbb P$ with
respect to this
ordering are the
squares of nonzero
elements.

{\rm(b)} Let\/
$\mathbb K$ be an
algebraically closed
field with nonidentity
involution. Then
$\charact\mathbb K=0$,
\begin{equation}\label{123}
\mathbb
P:=\bigl\{k\in{\mathbb
K}\,\bigr|\,
\bar{k}=k\bigr\}
\end{equation}
is a real closed
field,
\begin{equation}\label{1pp11}
\mathbb K={\mathbb
P}+{\mathbb P}i,\qquad
i^2=-1,
\end{equation}
and the involution is
``complex
conjugation'':
\begin{equation}\label{1ii}
\overline{a+bi}=a-bi,\qquad
a,b\in\mathbb P.
\end{equation}

{\rm(c)} Every
algebraically closed
field $\mathbb F$ of
characteristic $0$
contains at least one
real closed subfield.
Hence, $\mathbb F$ can
be represented in the
form \eqref{1pp11} and
possesses the
involution
\eqref{1ii}.
\end{lemma}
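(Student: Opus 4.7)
The plan is to deduce all three parts of the lemma from the Artin-Schreier theorem, which is the classical statement that if $\mathbb K$ is algebraically closed and $\mathbb P\subseteq\mathbb K$ satisfies $1<[\mathbb K:\mathbb P]<\infty$, then $[\mathbb K:\mathbb P]=2$, $\charact\mathbb P=0$, the element $-1$ is not a square in $\mathbb P$, and $\mathbb P$ admits a unique field ordering whose positive cone is exactly $\{a^2\mid a\in\mathbb P,\ a\neq 0\}$. This is standard (see, e.g., Lang's \emph{Algebra}, Ch.~XI, or Jacobson's \emph{Basic Algebra~I}) and does essentially all the work; my job is just to quote it and translate it into the formulations used here.

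For part~(a) the Artin-Schreier theorem directly gives $\charact\mathbb P=0$, the existence and uniqueness of the ordering, and the description of positives as nonzero squares. The decomposition \eqref{1pp} follows because $[\mathbb K:\mathbb P]=2$ forces $\mathbb K=\mathbb P(\sqrt d)$ for some non-square $d\in\mathbb P$; since the Artin-Schreier ordering makes $-1$ a non-square, $-1$ itself is a legitimate choice, and adjusting the generator by a scalar we may take $d=-1$ to obtain $\mathbb K=\mathbb P+\mathbb P i$ with $i^2=-1$.

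For part~(b), the involution generates a group $G$ of field automorphisms of $\mathbb K$ of order~$2$ with fixed field $\mathbb P$ as defined by \eqref{123}. Artin's theorem on fixed fields then gives $[\mathbb K:\mathbb P]=|G|=2$, so $\mathbb K$ is a nontrivial finite algebraic closure of $\mathbb P$ and part~(a) applies: $\mathbb P$ is real closed, $\charact\mathbb K=0$, and $\mathbb K=\mathbb P+\mathbb P i$ with $i^2=-1$. Because the involution restricts to the identity on $\mathbb P$ but is nontrivial on $\mathbb K$, it must send $i$ to the other root of $x^2+1$; thus $\bar i=-i$, which is \eqref{1ii}.

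For part~(c) I would apply Zorn's lemma to the poset of \emph{formally real} subfields of $\mathbb F$, i.e., those subfields in which $-1$ is not a sum of squares. Since $\charact\mathbb F=0$, the prime field $\mathbb Q\subseteq\mathbb F$ is formally real, so the poset is nonempty; the union of a chain of formally real subfields is patently formally real. A maximal element $\mathbb P$ is a formally real subfield of the algebraically closed field $\mathbb F$ that cannot be properly enlarged within $\mathbb F$ while preserving formal reality. A standard Artin-Schreier argument (one shows that no proper finite algebraic extension of $\mathbb P$ inside $\mathbb F$ can exist without destroying formal reality of $\mathbb P$) gives that $\mathbb P$ is real closed with algebraic closure $\mathbb F$, after which (a) yields the representation \eqref{1pp11} and the involution \eqref{1ii}. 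The only real obstacle is isolating the correct formulation of Artin-Schreier and the fixed-field theorem; no new computation is required.
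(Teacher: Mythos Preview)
Your proposal is correct and matches the paper's approach: the paper likewise cites Lang for the Artin--Schreier content in (a), gives the same fixed-field-of-order-$2$ argument for (b), and for (c) simply cites van der Waerden (\S 82, Theorem 7c), which is exactly the Zorn's-lemma construction you sketch. One detail to make explicit in (c): to conclude that $\mathbb F$ is the \emph{algebraic closure} of your maximal formally real $\mathbb P$ you need $\mathbb F$ to be algebraic over $\mathbb P$, which follows from maximality since $\mathbb P(t)$ is again formally real for any $t\in\mathbb F$ transcendental over $\mathbb P$.
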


\begin{proof} (a)
Let $\mathbb K$ be the
algebraic closure of
$\mathbb F$ and
suppose
$1<\dim_{\mathbb
P}{\mathbb K}<\infty$.
By Corollary 2 in
\cite[Chapter VIII, \S
9]{len}, we have
$\charact{\mathbb
P}=0$ and \eqref{1pp}.
The other statements
of part (a) follow
from Proposition 3 and
Theorem 1 in
\cite[Chapter XI, \S
2]{len}.

(b) If $\mathbb K$ is
an algebraically
closed field with
nonidentity involution
$a\mapsto \bar{a}$,
then this involution
is an automorphism of
order 2. Hence
${\mathbb K}$ has
degree $2$ over its
\emph{fixed field}
${\mathbb P}$ defined
in \eqref{123}. Thus,
${\mathbb P}$ is a
real closed field. Let
$i\in \mathbb K$ be
such that $i^2=-1$. By
(a), every element of
${\mathbb K}$ is
uniquely represented
in the form $k=a+bi$
with $a,b\in{\mathbb
P}$. The involution is
an automorphism of
${\mathbb K}$, so
$\bar{i}^2=-1$. Thus,
$\bar{i}=-i$ and the
involution has the
form \eqref{1ii}.

(c) This statement is
proved in \cite[\S 82,
Theorem 7c]{wan}.
\end{proof}

For notational
convenience, write
\[
A^{-T}:=(A^{-1})^T\quad\text{and}\quad
A^{-*}:=(A^{-1})^*.
\]
The \emph{cosquare} of
a nonsingular matrix
$A$ is $A^{-T}A$. If
two nonsingular
matrices are congruent
then their cosquares
are similar because
\[
(S^TAS)^{-T}(S^TAS)
=S^{-1}A^{-T}AS.
\]
If $\Phi$ is a
cosquare, every matrix
$C$ such that
$C^{-T}C=\Phi$ is
called a {\it cosquare
root} of $\Phi$; we
choose any cosquare
root and denote it by
$\sqrt[T]{\Phi}$.

Analogously, $A^{-*}A$
is the
\emph{*cosquare} of
$A$. If two
nonsingular matrices
are *congruent then
their *cosquares are
similar. If $\Phi$ is
a *cosquare, every
matrix $C$ such that
$C^{-*}C=\Phi$ is
called a {\it
*cosquare root} of
$\Phi$; we choose any
*cosquare root and
denote it by
$\sqrt[\displaystyle
*]{\Phi}$.\label{pppp}

For each real closed
field, we denote by
$\le$ the ordering
from Lemma
\ref{l00}(a). Let
$\mathbb K=\mathbb
P+\mathbb Pi$ be an
algebraically closed
field with nonidentity
involution represented
in the form
\eqref{1pp11}. By the
\emph{absolute value}
of $k=a+bi\in\mathbb
K$ ($a,b\in\mathbb P)$
we mean a unique
nonnegative ``real''
root of $a^2+b^2$,
which we write as
\begin{equation}\label{1kk}
|k|:=\sqrt{a^2+b^2}
\end{equation}
(this definition is
unambiguous since
$\mathbb K$ is
represented in the
form \eqref{1pp11}
uniquely up to
replacement of $i$ by
$-i$). For each
$M\in{\mathbb
K}^{m\times n}$, its
{\it realification}
$M^{\mathbb
P}\in{\mathbb
P}^{2m\times 2n}$ is
obtained by replacing
every entry $a+bi$ of
$M$ by the $2\times 2$
block
\begin{equation}\label{1j}
\begin{matrix}
 a&-b\\b&a
\end{matrix}
\end{equation}

Define the $n$-by-$n$
matrices
\begin{equation*}\label{1aaaa}
\Delta_n(\lambda):=
    \begin{bmatrix}
0&&&\lambda
\\
&&\ddd&i\\
&\lambda&\ddd&\\
\lambda&i&&0
\end{bmatrix},\qquad
J_n(\lambda) :=
\begin{bmatrix}
  \lambda&1&&0\\
  &\lambda&\ddots&\\
  &&\ddots&1\\
0&&&\lambda
\end{bmatrix},
\end{equation*}
\begin{equation*}\label{1aa}
\Gamma_n :=
\begin{bmatrix}
0&&&&&\ddd
\\&&&&1&
\ddd\\
&&&-1&-1&\\ &&1&1&\\
&-1&-1&
&&\\
1&1&&&&0
\end{bmatrix},
\end{equation*}
and
\begin{equation*}\label{1aaa}
\Gamma'_n:=
 \begin{bmatrix}
  0&& & &&-1\\
  && & & \ddd &1\\
  && & -1& \ddd &
   \\
  && 1& 1&& \\
   &\ddd&\ddd&&& \\
  1&1&& & &0
\end{bmatrix}
\text{\quad if $n$ is
even},
 \end{equation*}
\begin{equation*}\label{1aaa2}
\Gamma'_n:=
\begin{bmatrix}
  0&&& &&&1\\
  &&& && \ddd &0\\
  &&&& 1& \ddd &
   \\
  &&& 1&0&&
  \\
  &&1&1 &&& \\
  & \ddd &\ddd& &&&\\
  1&1&& &&&0
\end{bmatrix}
\text{\quad if $n$ is
odd};
 \end{equation*}
the middle groups of
entries are in the
center of $\Gamma'_n$.

The {\it skew sum} of
two matrices $A$ and
$B$ is
\begin{equation*}\label{1.2a}
[A\diag B]:=
\begin{bmatrix}0&B\\A &0
\end{bmatrix}.
\end{equation*}

The main result of
this article is the
following theorem,
which is proved in
Section \ref{secmat}.
It was obtained for
complex matrices in
\cite{hor-ser,
hor-ser_can}.

\begin{theorem}
\label{t1.1} {\rm(a)}
Over an algebraically
closed field of
characteristic
different from $2$,
every square matrix is
congruent to a direct
sum, determined
uniquely up to
permutation of
summands, of matrices
of the form:
   \begin{itemize}
\item [{\rm(i)}]
$J_n(0)$;

\item [{\rm(ii)}]
$[J_n(\lambda)\diag
I_n]$, in which
$\lambda\ne
(-1)^{n+1}$,
$\lambda\ne 0,$ and
$\lambda$ is
determined up to
replacement by
$\lambda^{-1}$;

\item [{\rm(iii)}]
$\sqrt[T]{ J_n
((-1)^{n+1})}$.
\end{itemize}
Instead of the matrix
{\rm(iii)}, one may
use\/ $\Gamma_n$, or\/
$\Gamma'_n$, or any
other nonsingular
matrix whose cosquare
is similar to
$J_n((-1)^{n+1})$;
these matrices are
congruent to
{\rm(iii)}.
\medskip

{\rm (b)} Over an
algebraically closed
field of
characteristic $2$,
every square matrix is
congruent to a direct
sum, determined
uniquely up to
permutation of
summands, of matrices
of the form:
   \begin{itemize}
\item [{\rm(i)}]
$J_n(0)$;

\item [{\rm(ii)}]
$[J_n(\lambda)\diag
I_n]$, in which
$\lambda$ is nonzero
and is determined up
to replacement by
$\lambda^{-1}$;

\item [{\rm(iii)}]
$\sqrt[T]{J_{n}(1)}$
with odd $n$; no
blocks of the form
$[J_n(1)\diag
I_n]$ are permitted
for any odd $n$ for
which a block
$\sqrt[T]{J_{n}(1)}$
occurs in the direct
sum.%
\footnote{If the
direct sum would
otherwise contain both
$\sqrt[T]{J_{n}(1)}$
and $[J_n(1)\diag
I_n]$ for the same odd
$n$, then this pair of
blocks must be
replaced by three
blocks
$\sqrt[T]{J_{n}(1)}$.
This restriction is
imposed to ensure
uniqueness of the
canonical direct sum
because
$\sqrt[T]{J_{n}(1)}
\oplus [J_n(1)\diag
I_n]$ is congruent to
$\sqrt[T]{J_{n}(1)}
\oplus
\sqrt[T]{J_{n}(1)}
\oplus\sqrt[T]{J_{n}(1)}$;
see \cite{ser0} and
Remark \ref{rem3}.}
\end{itemize}
Instead of the matrix
{\rm(iii)}, one may
use\/ $\Gamma'_n$ or
any other nonsingular
matrix whose cosquare
is similar to
$J_n(1)$, these
matrices are congruent
to {\rm(iii)}.
\medskip

{\rm(c)} Over an
algebraically closed
field with nonidentity
involution, every
square matrix is
*congruent to a direct
sum, determined
uniquely up to
permutation of
summands, of matrices
of the form:
   \begin{itemize}
 \item [{\rm(i)}]
$J_n(0)$;

\item [{\rm(ii)}]
$[J_n(\lambda)\diag
I_n]$, in which
$|\lambda|\ne 1$
$($see \eqref{1kk}$)$,
$\lambda\ne 0$, and
$\lambda$ is
determined up to
replacement by
$\bar{\lambda}^{-1}$
$($alternatively, in
which $|\lambda|>1)$;

\item [{\rm(iii)}]
$\pm\sqrt[\displaystyle
*]{ J_n (\lambda)}$,
in which
$|\lambda|=1$.
\end{itemize}
Instead of the
matrices {\rm(iii)},
one may use any of the
matrices
\begin{equation}\label{jde}
\mu\sqrt[\displaystyle
*]{ J_n (1)},\quad
\mu\Gamma_n,\quad
\mu\Gamma_n',\quad
\mu\Delta_n(1),\quad
\mu A
\end{equation}
with $|\mu|=1$, where
$A$ is any $n\times n$
matrix whose *cosquare
is similar to a Jordan
block.
\medskip

{\rm (d)} Over a real
closed field\/
$\mathbb P$ whose
algebraic closure is
represented in the
form \eqref{1pp},
every square matrix is
congruent to a direct
sum, determined
uniquely up to
permutation of
summands, of matrices
of the form:
   \begin{itemize}
\item [{\rm(i)}]
$J_n(0)$;

\item [{\rm(ii)}]
 $[J_n(a)\diag I_n]$,
in which $0\ne a
\in{\mathbb P}$, $a\ne
(-1)^{n+1}$, and $a$
is determined up to
replacement by
$a^{-1}$
$($alternatively, $a
\in{\mathbb P}$ and
$|a|>1$ or $a=
(-1)^{n})$;

 \item
[{\rm(iii)}] $\pm
\sqrt[T]{
J_n((-1)^{n+1})}$;

 \item
[{\rm(ii$'$)}]
$[J_n(\lambda)^{\mathbb
P} \diag I_{2n}]$, in
which
$\lambda\in({\mathbb
P}+{\mathbb
P}i)\smallsetminus
{\mathbb P}$,
$|\lambda|\ne 1$, and
$\lambda$ is
determined up to
replacement by
$\bar{\lambda}$,
$\lambda^{-1}$, or
$\bar{\lambda}^{-1}$
$($alternatively,
$\lambda=a+bi$ with
$a,b\in\mathbb P$,
$b>0$, and
$a^2+b^2>1)$;

 \item
[{\rm(iii$'$)}]
$\pm\sqrt[T]{
J_n(\lambda)^{\,\mathbb
P}}$, in which
$\lambda\in({\mathbb
P}+{\mathbb
P}i)\smallsetminus
{\mathbb P}$,
$|\lambda|=1$, and
$\lambda$ is
determined up to
replacement by
$\bar{\lambda}$
$($alternatively,
$\lambda=a+bi$ with
$a,b\in\mathbb P$,
$b>0$, and
$a^2+b^2=1)$.
\end{itemize}
Instead of {\rm(iii)},
one may use $\pm
\Gamma_n$ or $\pm
\Gamma_n'$.

\noindent Instead of
{\rm(iii$'$)}, one may
use $\pm\big(\sqrt[
\displaystyle *]{
J_n(\lambda)}\,\big)^{\mathbb
P}$ with the same
$\lambda$, or any of
the matrices
\begin{equation}\label{dsk}
\big((c+i)
\Gamma_n\big)^{\mathbb
P},\quad\big((c+i)
\Gamma'_n\big)^{\mathbb
P},\quad\Delta_n(c+i)^{\mathbb
P}
\end{equation}
with $0\ne
c\in{\mathbb P}$.
\medskip

{\rm(e)} Over a skew
field of\/ $\mathbb
P$-quaternions
$($$\mathbb P$ is real
closed$)$ with
quaternionic
conjugation \eqref{ne}
or quaternionic
semiconjugation
\eqref{nen}, every
square matrix is
*congruent to a direct
sum, determined
uniquely up to
permutation of
summands, of matrices
of the form:
\begin{itemize}
\item [{\rm(i)}]
$J_n(0)$;

\item [{\rm(ii)}]
$[J_n(\lambda)\diag
I_n]$, in which
$0\ne\lambda\in\mathbb
P +\mathbb Pi$,
$|\lambda|\ne 1$, and
$\lambda$ is
determined up to
replacement by
$\bar{\lambda}$,
$\lambda^{-1}$, or
$\bar{\lambda}^{-1}$
$($alternatively,
$\lambda=a+bi$ with
$a,b\in\mathbb P$,
$b\ge 0$, and
$a^2+b^2>1)$;

\item [{\rm(iii)}]
$\varepsilon
\sqrt[\displaystyle
*]{ J_n (\lambda)}$,
in which
$\lambda\in\mathbb P
+\mathbb Pi$,
$|\lambda|=1$,
$\lambda$ is
determined up to
replacement by
$\bar{\lambda}$, and
\begin{equation}\label{kki}
\varepsilon :=
  \begin{cases}
     1,&
 \text{if
 the involution is
\eqref{ne},
$\lambda =(-1)^{n}$,}\\
& \text{and if
 the involution is
\eqref{nen},
$\lambda =(-1)^{n+1}$},\\
    \pm 1,&
 \text{otherwise.}
  \end{cases}
\end{equation}
\end{itemize}
Instead of {\rm(iii)},
one may use
\begin{equation}\label{gyo}
 (a+bi)\Gamma_n
 \quad\text{or}\quad
 (a+bi)\Gamma_n',
\end{equation}
in which
$a,b\in\mathbb P$,
$a^2+b^2=1$, and
\[
  \begin{cases}
b\ge 0 & \text{if the
involution is
\eqref{ne}},
\\
  a\ge 0 & \text{if the
involution is
\eqref{nen}}.
  \end{cases}
\]
Instead of {\rm(iii)},
one may also use
\begin{equation}\label{gyo1}
 (a+bi)\Delta_n(1),
\end{equation}
in which
$a,b\in\mathbb P$,
$a^2+b^2=1$, and
\[
  \begin{cases}
a\ge 0, & \text{if the
involution is
\eqref{ne}, $n$ is
even,}
\\
&\text{and if the
involution is
\eqref{nen}, $n$ is
odd},
\\
  b\ge 0, & \text{otherwise}.
  \end{cases}
\]
\end{theorem}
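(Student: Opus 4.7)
The plan is to apply the general reduction method of Section~\ref{sub_pos1}. A square matrix $A$ is interpreted as a representation of the one-vertex graph with a single nonoriented loop (bilinear or sesquilinear, according to the chosen involution), and the reduction theorem identifies its congruence/*congruence class with a pair consisting of (1)~a representation of an associated quiver $\underline P$ with relations and involution, and (2)~an isometry class of Hermitian forms over certain finite extensions of the center of $\mathbb F$. I first split off the singular part of $A$ via a Fitting-type decomposition into a nilpotent piece and a nonsingular piece; the nilpotent summand is a direct sum of ordinary Jordan blocks, yielding precisely the blocks $J_n(0)$ of type~(i), and the remaining analysis proceeds on the nonsingular summand.

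For nonsingular $A$ the key invariant is the similarity class of the cosquare $A^{-T}A$ (resp.\ the *cosquare $A^{-*}A$), whose Jordan structure is symmetric under $\lambda\mapsto\lambda^{-1}$ (resp.\ $\lambda\mapsto\bar\lambda^{-1}$). Eigenvalues not fixed by this symmetry pair off, and each dual pair $J_n(\lambda)\oplus J_n(\lambda^{-1})$ forms a single congruence class realized canonically by $[J_n(\lambda)\diag I_n]$ of type~(ii); the accompanying exclusion ($\lambda\ne(-1)^{n+1}$ in the congruence cases, $|\lambda|\ne 1$ in the *congruence cases) is precisely what separates these ``generic'' blocks from the self-dual part. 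For the self-dual part, the reduction theorem parametrizes the congruence classes sharing a given (co)square by isometry classes of Hermitian forms over an appropriate finite extension of the center of $\mathbb F$---namely $\mathbb F$ itself in parts~(a) and (b), the fixed field in (c), $\mathbb P$ or $\mathbb K=\mathbb P+\mathbb Pi$ in (d), and, in (e), a suitable subfield of $\mathbb H$ depending on the involution.

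The field-specific step is then to insert the known classifications of Hermitian forms over these extensions. Over an algebraically closed field every nondegenerate Hermitian form is trivial, so the self-dual summand is a unique block $\sqrt[T]{J_n((-1)^{n+1})}$ in (a) and $\sqrt[T]{J_n(1)}$ with odd $n$ in (b); for (b) the extra congruence $\sqrt[T]{J_n(1)}\oplus[J_n(1)\diag I_n]\cong\sqrt[T]{J_n(1)}\oplus\sqrt[T]{J_n(1)}\oplus\sqrt[T]{J_n(1)}$ flagged in the footnote must be exploited to rule out non-uniqueness. Over a real closed field or over an algebraically closed field with involution, Hermitian forms are classified by signature, giving the $\pm$ sign on the type-(iii) blocks; the realification $M^{\mathbb P}$ then encodes the pairs of nonreal conjugate eigenvalues in~(d). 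Over $\mathbb P$-quaternions the Hermitian classification is again by signature under either involution, but the signature is carried by a different scalar Hermitian element on each type of Jordan block, which accounts for the parity-and-involution-dependent rule~\eqref{kki} for the sign $\varepsilon$.

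The main obstacle, I expect, is the Jordan-versus-Frobenius simplification together with the verification of the alternative canonical representatives in \eqref{jde}, \eqref{dsk}, \eqref{gyo}, and~\eqref{gyo1}. The reduction machinery naturally delivers canonical matrices built from Frobenius companion blocks, so one must show that every nonsingular matrix whose (co)square is similar to a single Jordan block $J_n(\lambda)$ is congruent/*congruent to $\sqrt[T]{J_n(\lambda)}$ (or $\sqrt[\displaystyle *]{J_n(\lambda)}$), and conversely that $\Gamma_n$, $\Gamma'_n$, $\Delta_n(\cdot)$, and their scalar multiples all have (co)square with exactly this Jordan form and induce the correct sign of associated Hermitian form. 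The quaternionic case is the most delicate; as noted in the introduction, \cite{ser_quat} set the signs incorrectly, and careful bookkeeping of how multiplication of a block by a unit scalar $\mu=a+bi$ shifts the signature of the associated Hermitian form is what pins down $\varepsilon$ in~\eqref{kki} and the sign conventions in \eqref{gyo}--\eqref{gyo1}.
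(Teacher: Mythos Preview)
Your overall strategy matches the paper's: apply Theorem~\ref{tetete1} (via Theorem~\ref{Theorem 5}) to the one-loop pograph, take ${\cal O}_{\mathbb F}$ to be Jordan blocks, and read off (i)--(iii) from the existence criterion for $\sqrt[T]{\Phi}$ or $\sqrt[*]{\Phi}$ together with the Hermitian-form classification in Theorem~\ref{tetete}. Two corrections, though.

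First, there is no ``Frobenius-to-Jordan'' obstacle. Theorem~\ref{Theorem 5} is stated for an \emph{arbitrary} set ${\cal O}_{\mathbb F}$ of nonsingular indecomposable canonical matrices for similarity, so the paper simply takes ${\cal O}_{\mathbb F}$ to consist of Jordan blocks (or real Jordan blocks, or quaternionic Jordan blocks) from the outset; the existence criterion \eqref{lbdr}--\eqref{4.adlw} is stated for any $\Phi$, and \eqref{ndw} transports $\sqrt[*]{\Phi}$ across similarity. The actual work for the alternative representatives is the direct computation of the cosquares of $\Gamma_n$, $\Gamma_n'$, and $\Delta_n(\mu)$ (equations \eqref{1x11}, \eqref{1n''}, \eqref{kdt}), not any block conversion.

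Second, your explanation of $\varepsilon$ in part~(e) is off. It is not that ``the Hermitian classification is again by signature under either involution but carried by a different scalar element.'' The paper computes $\mathbb T({\cal A}_{\Phi})$ explicitly: when $\lambda\ne\pm 1$ it is $\mathbb K$ with complex conjugation (so $\varepsilon=\pm 1$), but when $\lambda=\pm 1$ it is $\mathbb H$ itself, and the induced involution on $\mathbb T({\cal A}_{\Phi})$ may be either quaternionic conjugation or quaternionic \emph{semi}conjugation, depending on the parity of $n$ and on which of \eqref{ne},\eqref{nen} is the ambient involution. Over $\mathbb H$ with semiconjugation all nondegenerate Hermitian forms of a given rank are equivalent---there is \emph{no} signature---and this is precisely why $\varepsilon=1$ in those cases (Theorem~\ref{tetete}(iii), first alternative). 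The parity-and-involution rule~\eqref{kki} records exactly when the induced involution on $\mathbb T({\cal A}_{\Phi})$ fails to be quaternionic conjugation.
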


In this theorem
``determined up to
replacement by'' means
that a block is
congruent or
*congruent to the
block obtained by
making the indicated
replacements.

\begin{remark}\label{rem3}
Theorem \ref{tetete}
ensures that each
system of linear
mappings and bilinear
forms on vector spaces
over an algebraically
or real closed field
as well as each system of
linear mappings and
sesquilinear forms on
vector spaces over an
algebraically closed
field or real
quaternions with
nonidentity involution,
decomposes into a
direct sum of
indecomposable systems
that is unique up to
isomorphisms of
summands. Over any
field of
characteristic not
$2$, two
decompositions into
indecomposables may
have nonisomorphic
direct summands, but
Theorem \ref{tetete1}
tells us that the
number of
indecomposable direct
summands does not
depend on the
decomposition.

However, over an
algebraically closed
field $\mathbb F$ of
characteristic $2$,
not even the number of
indecomposable direct
summands is invariant.
For example, the
matrices
\begin{equation}\label{1k}
[\,1\,]\oplus
[\,1\,]\oplus [\,1\,],
\qquad
\begin{bmatrix}
  0 & 1 \\
  1 & 0
\end{bmatrix}
\oplus [\,1\,]
\end{equation}
are congruent over
$\mathbb F$ since
\[
\begin{bmatrix}
  1&0&1\\1&1&0\\1&1&1
\end{bmatrix} \begin{bmatrix}
  1&0&0\\0&1&0\\0&0&1
\end{bmatrix}
\begin{bmatrix}
  1&1&1\\0&1&1\\1&0&1
\end{bmatrix}
=\begin{bmatrix}
  0&1&0\\1&0&0\\0&0&1
\end{bmatrix},
\]
but each of the direct
summands in \eqref{1k}
is indecomposable by
Theorem \ref{t1.1}(b).
The cancellation
theorem does not hold
for bilinear forms
over $\mathbb F$: the
matrices \eqref{1k}
are congruent but the
matrices
\[
[\,1\,]\oplus [\,1\,],
\qquad
\begin{bmatrix}
  0 & 1 \\
  1 & 0
\end{bmatrix}
\]
are not congruent
because they are
canonical.
\end{remark}

\subsection{Canonical matrices
for *congruence over a
field of
characteristic
different from 2}
\label{sub_02}

Canonical matrices for
congruence and
*congruence over a
field of
characteristic
different from 2 were
obtained in
{\cite[Theorem
3]{ser_izv}} up to
classification of
Hermitian forms. They
were based on the
Frobenius canonical
matrices for
similarity. In this
section we rephrase
\cite[Theorem
3]{ser_izv} in terms
of an \emph{arbitrary}
set of canonical
matrices for
similarity. This
flexibility is used in
the proof of Theorem
\ref{t1.1}. The same
flexibility is used in
\cite{hor-ser} to
construct simple
canonical matrices for
congruence or
*congruence over
$\mathbb C$, and in
\cite{ser_iso} to
construct simple
canonical matrices of
pairs $(A,B)$ in which
$B$ is a nondegenerate
Hermitian or
skew-Hermitian form
and $A$ is an
isometric operator
over an algebraically
or real closed field
or over real
quaternions.

In this section
$\mathbb F$ denotes a
field of
characteristic
different from 2 with
involution $a\mapsto
\bar{a}$, which can be
the identity. Thus,
congruence is a
special case of
*congruence.

For each polynomial
\[
f(x)=a_0x^n+a_1x^{n-1}+\dots
+a_n\in \mathbb F[x],
\]
we define the
polynomials
\begin{align*}\label{mau}
\bar f(x)&:=\bar
a_0x^n+\bar
a_1x^{n-1}+\dots+\bar
a_n,\\
f^{\vee}(x)&:=\bar
a_n^{-1}(\bar
a_nx^n+\dots+\bar
a_1x+\bar
a_0)\quad\text{if }
a_n\ne 0.
\end{align*}

The following lemma
was proved in
\cite[Lemma
6]{ser_izv} (or see
\cite[Lemma
2.3]{ser_iso}).

\begin{lemma}
\label{LEMMA 7} Let
$\mathbb F$ be a field
with involution
$a\mapsto \bar a$, let
$p(x) = p^{\vee}(x)$
be an irreducible
polynomial over
$\mathbb F$, and let
$r$ be the integer
part of $(\deg
p(x))/2$. Consider the
field
\begin{equation}\label{alft}
\mathbb F(\kappa) =
\mathbb
F[x]/p(x)\mathbb
F[x],\qquad \kappa:=
x+p(x)\mathbb F[x],
\end{equation}
with involution
\begin{equation}\label{alfta}
f(\kappa)^{\circ} :=
\bar f(\kappa^{-1}).
\end{equation}
Then each element of\/
$\mathbb F(\kappa)$ on
which the involution
acts identically is
uniquely representable
in the form
$q(\kappa)$, in which
\begin{equation}\label{ser13}
q(x)=a_rx^r+\dots+
a_1x +a_0+\bar
a_1x^{-1}+\dots+\bar
a_rx^{-r},
    \quad a_0 = \bar a_0,
\end{equation}
$a_0,\dots
a_r\in\mathbb F;$ if
$\deg p(x) = 2r$ is
even, then
\begin{equation*}\label{uvp}
a_r=
  \begin{cases}
    0
&\text{if the
involution
on $\mathbb F$ is the identity}, \\
    \bar a_r
&\text{if the
involution on $\mathbb
F$ is not the identity
and
$p(0)\ne 1$},\\
    -\bar a_r
&\text{if the
involution on $\mathbb
F$ is not the identity
and $p(0)=1$}.
  \end{cases}
\end{equation*}
\vskip-2em\eprf
\end{lemma}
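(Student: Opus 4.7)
The plan is to verify existence and uniqueness of the representation $q(\kappa)$ by exploiting that $\{\kappa^{-r},\kappa^{-r+1},\dots,\kappa^r\}$ is an $\mathbb F$-basis of $\mathbb F(\kappa)$ when $\deg p = 2r+1$ is odd, and a spanning set with exactly one linear relation when $\deg p = 2r$ is even. I would begin by verifying that the involution $\circ$ on $\mathbb F(\kappa)$ defined by \eqref{alfta} is well-defined: the map $f(x)\mapsto\bar f(x^{-1})$ descends modulo $(p)$ precisely because $p=p^{\vee}$ forces $\bar p(\kappa^{-1})$ to be a scalar multiple of $p(\kappa)=0$, and the invertibility of $\kappa$ follows from $p(0)\neq 0$, a condition already built into the definition of $p^{\vee}$.

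In the odd case $n=2r+1$, writing $u = \sum_{k=-r}^r b_k \kappa^k$ uniquely in the basis, the equation $u^{\circ}=\sum \bar b_{-k}\kappa^k$ combined with $u=u^{\circ}$ immediately forces $b_k=\bar b_{-k}$. Setting $a_k:=b_k$ for $k\ge 0$ rewrites $u$ as $q(\kappa)$ with the stated conditions $a_0=\bar a_0$ and $a_{-k}=\bar a_k$, and uniqueness is automatic from the basis property; no further constraint on $a_r$ is needed. In the even case $n=2r$, dividing $p(\kappa)=0$ by $\kappa^r$ yields a dependence $\sum_{k=-r}^r c_k \kappa^k=0$ with $c_k=a_{r-k}$ (where $a_0,\dots,a_{2r}$ are the coefficients of the monic $p$); the self-reciprocal identity $a_k\bar a_{2r}=\bar a_{2r-k}$ implied by $p=p^\vee$ translates into the symmetry $\bar c_{-k}=a_{2r}^{-1}c_k$ on the relation. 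For $u$ fixed by $\circ$, the vector $(b_k-\bar b_{-k})$ must be a scalar multiple $\mu c_k$ of this relation. The gauge freedom $b_k\mapsto b_k+\lambda c_k$ changes $\mu$ by $\lambda-\bar\lambda a_{2r}^{-1}$, so one first chooses $\lambda$ to make $\mu=0$, putting $u$ in the symmetric form $a_{-k}=\bar a_k$. Since $c_r=1$, the residual freedom is exactly $\lambda\in\mathbb F$ satisfying $a_{2r}\lambda=\bar\lambda$, which shifts $a_r$ additively within this $\mathbb F_0$-subspace.

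The main obstacle is then the even-case analysis of this residual freedom in the three sub-cases of the lemma; I expect this to be the bulk of the work. Letting $\mathbb F_0:=\{a\in\mathbb F:\bar a=a\}$, I would verify that the constraint on $a_r$ in each sub-case selects a unique coset representative. When the involution on $\mathbb F$ is the identity, $p=p^{\vee}$ forces $a_{2r}=1$, the shift subspace is all of $\mathbb F=\mathbb F_0$, and the constraint $a_r=0$ is the unique complement. When the involution on $\mathbb F$ is nontrivial and $p(0)=1$, the shift subspace is $\mathbb F_0$ itself, so the complementary condition is $a_r=-\bar a_r$. When the involution is nontrivial and $p(0)\ne 1$, a direct computation in the $\mathbb F_0$-basis $\{1,\theta\}$ of $\mathbb F$ (with $\bar\theta=-\theta$), using $|a_{2r}|^2=1$ but $a_{2r}\ne 1$, shows that the shift subspace is a one-dimensional $\mathbb F_0$-subspace intersecting $\mathbb F_0$ trivially, so $a_r=\bar a_r$ is the correct complementary constraint. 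A final sanity check compares the total $\mathbb F_0$-dimension of the parameter space $(a_0,\dots,a_r)$ against $\dim_{\mathbb F_0}\mathbb F(\kappa)^\circ$, computed from $[\mathbb F(\kappa):\mathbb F(\kappa)^\circ]=2$; agreement rules out both an overly restrictive and an insufficient constraint, completing uniqueness.
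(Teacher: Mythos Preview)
The paper does not actually prove this lemma; it is quoted from \cite[Lemma~6]{ser_izv} (with a pointer to \cite[Lemma~2.3]{ser_iso}) and the statement is closed with a bare \eprf. Your write-up therefore supplies what the paper omits, and the argument is essentially correct.

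One step deserves to be made explicit. You assert that in the even case one can always choose the gauge $\lambda$ so that $\mu=0$, but you do not verify this. It follows once you note that $\mu$ is not arbitrary: conjugating the identity $b_k-\bar b_{-k}=\mu c_k$, replacing $k$ by $-k$, and using $\bar c_{-k}=a_{2r}^{-1}c_k$ forces $\bar\mu=-a_{2r}\mu$. Then $\lambda:=-\mu/2$ gives
\[
\lambda-\bar\lambda\,a_{2r}^{-1}
=-\tfrac{\mu}{2}+\tfrac{\bar\mu}{2a_{2r}}
=-\tfrac{\mu}{2}-\tfrac{\mu}{2}=-\mu,
\]
as required. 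This is exactly where $\charact\mathbb F\ne 2$ enters; it is worth flagging, since the lemma statement itself is silent about characteristic (in the paper the hypothesis is ambient). Your Hilbert-90 style verification that the residual shift set $\{\lambda:a_{2r}\lambda=\bar\lambda\}$ is one-dimensional over $\mathbb F_0$ and meets $\mathbb F_0$ trivially when $a_{2r}\ne 1$ is correct; for nonemptiness you can simply exhibit $\lambda=1+\bar a_{2r}$ (or the skew elements when $a_{2r}=-1$).

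Finally, your closing sanity check via $[\mathbb F(\kappa):\mathbb F(\kappa)^{\circ}]=2$ is a good cross-check, but it presumes $\circ$ is nontrivial on $\mathbb F(\kappa)$; the degenerate cases $p(x)=x\pm 1$ with the identity involution on $\mathbb F$ (where $r=0$ and the claim is trivial) should be set aside before invoking it.
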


We say that a square
matrix is
\emph{indecomposable
for similarity} if it
is not similar to a
direct sum of square
matrices of smaller
sizes. Denote by
${\cal O}_{\mathbb F}$
any maximal set of
nonsingular
indecomposable
canonical matrices for
similarity;\label{papage}
this means that each
nonsingular
indecomposable matrix
is similar to exactly
one matrix from ${\cal
O}_{\mathbb F}$.

For example, ${\cal
O}_{\mathbb F}$ may
consist of all
nonsingular {\it
Frobenius blocks},
i.e., the matrices
\begin{equation}\label{3.lfo}
\Phi=\begin{bmatrix}
0&&
0&-c_n\\1&\ddots&&\vdots
\\&\ddots&0&-c_2\\
0&&1& -c_1
\end{bmatrix}
\end{equation}
whose characteristic
polynomials
$\chi_{\Phi}(x)$ are
powers of irreducible
monic polynomials
$p_{\Phi}(x)\ne x$:
\begin{equation}\label{ser24}
\chi_{\Phi}(x)=p_{\Phi}(x)^s
=x^n+
c_1x^{n-1}+\dots+c_n.
\end{equation}
If $\mathbb F$ is an
algebraically closed
field, then we may
take ${\cal
O}_{\mathbb F}$ to be
all nonsingular Jordan
blocks.

It suffices to
construct *cosquare
roots
$\sqrt[\displaystyle
*]{\Phi}$ (see page
\pageref{pppp}) only
for $\Phi\in{\cal
O}_{\mathbb F}$: then
we can take
\begin{equation}\label{ndw}
\sqrt[\displaystyle
*]{\Psi}
=S^*\sqrt[\displaystyle
*]{\Phi}S\qquad\text{if
 $\Psi =S^{-1}\Phi S$
and
$\sqrt[\displaystyle
*]{\Phi}$ exists}
\end{equation}
since  $\Phi=A^{-*}A$
implies $S^{-1}\Phi
S=(S^*AS)^{-*}(S^*AS).
$

Existence conditions
and an explicit form
of
$\sqrt[\displaystyle
*]{\Phi}$ for
Frobenius blocks
$\Phi$ over a field of
characteristic not $2$
were established in
\cite[Theorem
7]{ser_izv}; this
result is presented in
Lemma \ref{lsdy1}. In
the proof of Theorem
\ref{t1.1}, we take
another set ${\cal
O}_{\mathbb F}$ and
construct simpler
*cosquare roots over
an algebraically or
real closed field
$\mathbb F$.

The version of the
following theorem
given in \cite[Theorem
3]{ser_izv} considers
the case in which
${\cal O}_{\mathbb F}$
consists of all
nonsingular Frobenius
blocks.

\begin{theorem}
\label{Theorem 5}
{\rm(a)} Let\/
$\mathbb F$ be a field
of characteristic
different from $2$
with involution
$($which can be the
identity$)$. Let
${\cal O}_{\mathbb F}$
be a maximal set of
nonsingular
indecomposable
canonical matrices for
similarity over
$\mathbb F$. Every
square matrix $A$
over\/ $\mathbb F$ is
*congruent to a direct
sum of matrices of the
following types:
\begin{itemize}
 \item [{\rm(i)}]
$J_n(0)$;

 \item [{\rm(ii)}]
$[\Phi\diag I_n]$, in
which $\Phi\in {\cal
O}_{\mathbb F}$ is an
$n\times n$ matrix
such that
$\sqrt[\displaystyle
*]{\Phi}$ does not
exist $($see Lemma
{\rm\ref{lsdy1}}$)$;
and

 \item [{\rm(iii)}]
$\sqrt[\displaystyle
*]{\Phi}q(\Phi)$, in
which $\Phi\in{\cal
O}_{\mathbb F}$ is
such that
$\sqrt[\displaystyle
*]{\Phi}$ exists and
$q(x)\ne 0$ has the
form \eqref{ser13} in
which $r$ is the
integer part of $(\deg
p_{\Phi}(x))/2$ and
$p_{\Phi}(x)$ is the
irreducible divisor of
the characteristic
polynomial of $\Phi$.
\end{itemize}
The summands are
determined to the
following extent:
\begin{description}
  \item [Type (i)]
uniquely.

  \item [Type (ii)]
up to replacement of
$\Phi$ by the matrix
$\Psi\in{\cal
O}_{\mathbb F}$ that
is similar to
$\Phi^{-*}$ $($i.e.,
whose characteristic
polynomial is
$\chi_{\Phi}^{\vee}(x))$.

  \item [Type (iii)]
up to replacement of
the whole group of
summands
\[
\sqrt[\displaystyle
*]{\Phi}q_1(\Phi)
\oplus\dots\oplus
\sqrt[\displaystyle
*]{\Phi}q_s(\Phi)
\]
with the same $\Phi$
by a direct sum
\[
\sqrt[\displaystyle
*]{\Phi}q'_1(\Phi)
\oplus\dots\oplus
\sqrt[\displaystyle
*]{\Phi}q'_s(\Phi)
\]
such that each
$q'_i(x)$ is a nonzero
function of the form
\eqref{ser13} and the
Hermitian forms
\begin{gather*}\label{777}
q_1(\kappa)x_1^{\circ}x_1+\dots+
q_s(\kappa)x_s^{\circ}x_s,
\\\label{777s}
q'_1(\kappa)x_1^{\circ}x_1+\dots+
q'_s(\kappa)x_s^{\circ}x_s
\end{gather*}
are equivalent over
the field \eqref{alft}
with involution
\eqref{alfta}.
\end{description}

{\rm(b)} In
particular, if\/
$\mathbb F$ is an
algebraically closed
field of
characteristic
different from $2$
with the identity
involution, then the
summands of type
{\rm(iii)} can be
taken equal to
$\sqrt[\displaystyle
*]{\Phi}$. If\/
$\mathbb F$ is an
algebraically closed
field with nonidentity
involution, or a real
closed field, then the
summands of type
{\rm(iii)} can be
taken equal to
$\pm\sqrt[\displaystyle
*]{\Phi}$. In these
cases the summands are
uniquely determined by
the matrix $A$.
\end{theorem}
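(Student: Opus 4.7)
The plan is to deduce Theorem~\ref{Theorem 5} from \cite[Theorem~3]{ser_izv}, which establishes the analogous statement in the special case where $\mathcal{O}_{\mathbb F}$ consists of the nonsingular Frobenius blocks $\mathcal{F}$ of \eqref{3.lfo}. The core task in proving part (a) is to transfer the canonical form and its uniqueness from $\mathcal{F}$ to an arbitrary maximal set $\mathcal{O}_{\mathbb F}$; part (b) is then a specialization using classical facts about Hermitian forms.

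For existence, fix the bijection $\mathcal{F} \leftrightarrow \mathcal{O}_{\mathbb F}$ sending each Frobenius block $\Phi_0$ to the unique $\Phi \in \mathcal{O}_{\mathbb F}$ similar to it, and choose $S$ with $\Phi = S^{-1}\Phi_0 S$. A direct block computation shows that the *congruence by $T = \mathrm{diag}(S, S^{-*})$ sends $[\Phi_0 \diag I_n]$ to $[\Phi \diag I_n]$, converting type (ii) summands. For type (iii), relation \eqref{ndw} lets us take $\sqrt[\displaystyle *]{\Phi} := S^*\sqrt[\displaystyle *]{\Phi_0}\, S$, whence
\[
\sqrt[\displaystyle *]{\Phi}\, q(\Phi) \;=\; S^*\sqrt[\displaystyle *]{\Phi_0}\, S \cdot S^{-1} q(\Phi_0) S \;=\; S^* \bigl(\sqrt[\displaystyle *]{\Phi_0}\, q(\Phi_0)\bigr) S
\]
is *congruent to the corresponding Frobenius summand. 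Thus the canonical form of \cite[Theorem~3]{ser_izv} is converted summand-by-summand to the claimed form.

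For uniqueness, observe that every invariant in part (a) depends only on the similarity class of $\Phi$, not on its particular matrix realization: the irreducible divisor $p_\Phi$ of $\chi_\Phi$ determines both the extension field $\mathbb{F}(\kappa) = \mathbb{F}[x]/p_\Phi(x)\mathbb{F}[x]$ and its involution \eqref{alfta}, and hence the equivalence class of the Hermitian form $q_1(\kappa)x_1^\circ x_1 + \dots + q_s(\kappa)x_s^\circ x_s$. The replacement $\Phi \mapsto \Phi^{-*}$ in type (ii) corresponds to $p_\Phi \mapsto p_\Phi^\vee$ (the roots of $\chi_{\Phi^{-*}}$ are the $\bar\lambda^{-1}$ for roots $\lambda$ of $\chi_\Phi$), matching the Frobenius statement. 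Consequently the uniqueness from \cite[Theorem~3]{ser_izv} transfers verbatim.

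For part (b), specialize using the classification of Hermitian forms over $\mathbb{F}(\kappa)$. If $\mathbb{F}$ is algebraically closed with the identity involution, then $p_\Phi$ has degree $1$ so $\mathbb{F}(\kappa) = \mathbb{F}$, and the form $a_0 x^\circ x$ is equivalent to $x^\circ x$ since every nonzero element is a square; the summand reduces to $\sqrt[\displaystyle *]{\Phi}$. If $\mathbb{F}$ is algebraically closed with nonidentity involution or real closed, then Hermitian forms over $\mathbb{F}(\kappa)$ are classified by rank and signature over the real closed fixed field, so a nonzero $q(\kappa)$ normalizes to $\pm 1$ and the summand becomes $\pm\sqrt[\displaystyle *]{\Phi}$, with uniqueness furnished by the sign invariant. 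The principal obstacle is verifying that the Hermitian-form equivalence in the uniqueness statement for type (iii) is genuinely intrinsic to the similarity class of $\Phi$ and independent of the choice of *cosquare root $\sqrt[\displaystyle *]{\Phi}$; once this invariance is in hand via Lemma~\ref{LEMMA 7}, the rest is a mechanical translation from the Frobenius version.
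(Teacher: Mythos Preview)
Your proposal is correct in outline but takes a genuinely different route from the paper. You reduce to the Frobenius-block case of \cite[Theorem~3]{ser_izv} and then transfer each summand via the similarity $S^{-1}\Phi_0 S=\Phi$, which is clean and elementary once that theorem is taken as a black box. The paper instead reproves the result from scratch for an arbitrary $\mathcal{O}_{\mathbb F}$: it interprets a square matrix as a representation of the one-loop pograph \eqref{jsos+}, invokes the general reduction Theorem~\ref{tetete1}, and uses Lemma~\ref{lenhi} to identify $\ind_0(\underline P)$, $\ind_1(\underline P)$, the field $\mathbb T(\mathcal A_\Phi)\cong\mathbb F(\kappa)$, and the orbit \eqref{wmp} directly in terms of $\mathcal{O}_{\mathbb F}$. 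Your approach is shorter if one is willing to cite \cite{ser_izv}; the paper's approach is self-contained within the pograph framework developed in Section~\ref{sub_pos1} and avoids any separate transfer step.

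Two small points where your sketch is looser than the paper: for part~(b) in the real closed case with $\deg p_\Phi=2$, you need (and the paper supplies) the observation that the involution \eqref{kxu} on $\mathbb F(\kappa)$ is nonidentity, since otherwise $\kappa=\kappa^{-1}$ would force $p_\Phi(x)=x^2-1$, contradicting irreducibility; only then does the Hermitian form reduce to $\pm 1$. And your appeal to Lemma~\ref{LEMMA 7} for the independence of the Hermitian invariant from the choice of $\sqrt[\displaystyle *]{\Phi}$ is not quite the right tool: that lemma only parametrizes the fixed elements of $\mathbb F(\kappa)$. In fact the theorem is stated for a \emph{fixed} choice of $\sqrt[\displaystyle *]{\Phi}$, and both your argument (via \eqref{ndw}) and the paper's (via \eqref{nsp}) simply work with one such choice, so this ``obstacle'' does not actually arise.
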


Let $$f(x)=
\gamma_0x^m +
\gamma_1x^{m-1}+\dots+\gamma_m\in
\mathbb F[x],\qquad
\gamma_0\ne
0\ne\gamma_m.$$ A
vector $(a_1,
a_{2},\dots, a_n)$
over $\mathbb F$ is
said to be
\emph{$f$-recurrent}
if $n\le m$, or if
\[
\gamma_0 a_{l} +
\gamma_{1}a_{l+1}+\dots+
\gamma_ma_{l+m}=0,\qquad
l=1,2,\dots,n - m
\]
(by definition, it is
not $f$-recurrent if
$m=0$). Thus, this
vector is completely
determined by any
fragment of length
$m$.

The following lemma
was stated in
\cite[Theorem
7]{ser_izv} but only a
sketch of the proof
was given.

\begin{lemma}
\label{lsdy1} Let
$\mathbb F$ be a field
of characteristic not
$2$ with involution
$a\mapsto \bar a$
$($possibly, the
identity$)$. Let
$\Phi\in\mathbb
F^{n\times n}$ be
nonsingular and
indecomposable for
similarity; thus, its
characteristic
polynomial is a power
of some irreducible
polynomial
$p_{\Phi}(x)$.

{\rm(a)}
$\sqrt[\displaystyle
*]{\Phi}$ exists if
and only if
\begin{equation}\label{lbdr}
 p_{\Phi}(x) =
p_{\Phi}^{\vee}(x),\ \
\text{and}
\end{equation}
\begin{equation}\label{4.adlw}
\text{if the
involution on $\mathbb
F$ is the identity,
also $p_{\Phi}(x)\ne x
+ (-1)^{n+1}$}.
\end{equation}

{\rm(b)} If
\eqref{lbdr} and
\eqref{4.adlw} are
satisfied and $\Phi$
is a nonsingular
Frobenius block
\eqref{3.lfo} with
characteristic
polynomial
\begin{equation}\label{ser24lk}
\chi_{\Phi}(x)=p_{\Phi}(x)^s
=x^n+
c_1x^{n-1}+\dots+c_n,
\end{equation}
then for
$\sqrt[\displaystyle
*]{\Phi}$ one can take
the Toeplitz matrix
\begin{equation}\label{okjd}
\sqrt[\displaystyle
*]{\Phi}:= [a_{i-j}]=
\begin{bmatrix}
a_0
&a_{-1}&\ddots&a_{1-n}
\\a_{1}&a_0
&\ddots&\ddots
\\\ddots&\ddots&
\ddots&a_{-1}
\\ a_{n-1}&\ddots&
a_{1}&a_0
\end{bmatrix},
\end{equation}
whose vector of
entries
$(a_{1-n},a_{2-n},\dots,a_{n-1})$
is the
$\chi_{\Phi}$-recurrent
extension of the
vector
\begin{equation}\label{ksy}
v=(a_{1-m},\dots,a_{m})
=(a,0,\dots,0,\bar a)
\end{equation}
of length
\begin{equation}\label{leg}
2m=
  \begin{cases}
    n & \text{if $n$ is even}, \\
    n+1 & \text{if $n$ is
odd,}
  \end{cases}
\end{equation}
in which
\begin{equation}
 \label{mag}
a:=
  \begin{cases}
1 & \text{if $n$ is
even,
    except for the
    case}
    \\
    &
\qquad
p_{\Phi}(x)=x+c\
\text{with }c
^{n-1}=-1,
          \\
    \chi_{\Phi}(-1)&
\text{if $n$ is odd
and $p_{\Phi}(x)\ne
x+1$,}
           \\
e-\bar
e&\text{otherwise,
with any fixed $\bar
e\ne e\in\mathbb F$}.
  \end{cases}
\end{equation}
\end{lemma}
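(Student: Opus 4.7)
The plan is to establish both parts of the lemma by an explicit Toeplitz construction of $\sqrt[\displaystyle *]{\Phi}$ whenever \eqref{lbdr} and \eqref{4.adlw} hold, and to prove impossibility by direct computation when they fail. By \eqref{ndw}, the construction needs to be carried out only for Frobenius blocks, and this simultaneously supplies the sufficiency direction of (a).

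For the necessity in (a), suppose $\Phi = A^{-*}A$ with $A$ nonsingular. A direct manipulation gives $A\Phi A^{-1} = AA^{-*} = \Phi^{-*}$, so $\Phi$ is similar to $\Phi^{-*}$. Since $\Phi$ is indecomposable for similarity with $\chi_\Phi = p_\Phi^s$, while the characteristic polynomial of $\Phi^{-*}$ is monic with roots $1/\bar\lambda$ for each eigenvalue $\lambda$ of $\Phi$, i.e., equals $(p_\Phi^\vee)^s$, this forces $p_\Phi = p_\Phi^\vee$, giving \eqref{lbdr}. For \eqref{4.adlw}, I would assume the involution is trivial and $p_\Phi(x) = x + (-1)^{n+1}$, so that $\Phi \sim J_n((-1)^n)$, and reduce to showing that no nonsingular $A$ satisfies $A = A^T J_n((-1)^n)$; unpacking the equation along the superdiagonals forces $A$ to be strictly triangular and hence singular, a pattern already transparent for $n=1,2$.

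For (b), I would write the sought $A$ as a Toeplitz matrix $[a_{i-j}]$ and read off $A = A^*\Phi$ column by column. Because $\Phi$ acts as the shift $e_j \mapsto e_{j+1}$ for $j < n$, columns $1,\dots,n-1$ collapse into the single family of symmetry relations $a_k = \bar a_{1-k}$, while the last column encodes the coefficients of $\chi_\Phi$ and yields precisely the $\chi_\Phi$-recurrence defined just before the lemma. Together these determine the vector $(a_{1-n},\dots,a_{n-1})$ from its length-$2m$ central fragment, and the simplest consistent choice is $(a, 0, \dots, 0, \bar a)$ as in \eqref{ksy}. The assumption $p_\Phi = p_\Phi^\vee$ is exactly what makes the recurrence compatible with the palindromic symmetry on the overlap, so that the extension is internally consistent for any $a \in \mathbb F$.

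The main obstacle will be validating the case split for $a$ in \eqref{mag} and the nonsingularity of the resulting $A$. For $n$ even I expect the generic choice $a = 1$ to succeed, failing precisely when $p_\Phi(x) = x+c$ with $c^{n-1} = -1$ because the symmetry then forces the middle of the sequence to collide with itself; for odd $n$ a direct computation of the self-paired central entry $a_0$ via the recurrence from $(a, 0, \ldots, 0, \bar a)$ shows it to be proportional to $\chi_\Phi(-1)\cdot a$, explaining the choice $a = \chi_\Phi(-1)$ when $p_\Phi(x) \ne x+1$. The remaining ``otherwise'' subcases force $a = e - \bar e$ with $e \ne \bar e \in \mathbb F$; a short check confirms that under identity involution these subcases reduce to $p_\Phi(x) = x + (-1)^{n+1}$, precisely the instances ruled out by \eqref{4.adlw}, which closes the loop between parts (a) and (b). Once $a$ is pinned down as in \eqref{mag}, nonsingularity of $A$ follows by cofactor expansion along the anti-diagonal, where the propagated boundary entry $\bar a \ne 0$ guarantees a nonzero contribution.
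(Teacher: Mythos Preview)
Your setup is largely aligned with the paper: reducing to Frobenius blocks via \eqref{ndw}, deriving the similarity $\Phi\sim\Phi^{-*}$ to get \eqref{lbdr}, and observing that $A=A^*\Phi$ with $\Phi$ a Frobenius block forces $A$ to be Toeplitz with a $\chi_\Phi$-recurrent entry vector satisfying the palindrome $a_k=\bar a_{1-k}$. That is exactly how the paper begins.

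The substantive gap is the nonsingularity argument. A cofactor expansion ``along the anti-diagonal'' does not work: a single nonzero term in an expansion does not preclude cancellation, and for a general Toeplitz matrix there is no anti-diagonal expansion to speak of. The paper's mechanism is different and is the key idea you are missing. One introduces $\mu_\Phi(x):=p_\Phi(x)^{s-1}$ and shows that for a Toeplitz $A=[a_{i-j}]$ with $\chi_\Phi$-recurrent entry vector, the rows of $A$ are $w\Phi^{n-1},\dots,w\Phi,w$ (with $w$ the last row); a linear dependence gives $wf(\Phi)=0$ for some $f$ of degree $<n$, and a gcd argument against $\chi_\Phi=p_\Phi^s$ forces $w\mu_\Phi(\Phi)=0$, i.e.\ the entry vector is $\mu_\Phi$-recurrent. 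Thus $A$ is nonsingular \emph{iff} the vector is not $\mu_\Phi$-recurrent. This criterion is what drives the case split in \eqref{mag}: for $n$ even the seed $(a,0,\dots,0,\bar a)$ has length $n$, so $\chi_\Phi$-recurrence is automatic and one only needs non-$\mu_\Phi$-recurrence, which for $\mu_\Phi=(x+c)^{n-1}$ becomes $a+c^{n-1}\bar a\ne 0$ (hence the exceptional clause $c^{n-1}=-1$); for $n$ odd the seed has length $n+1$, so one must first check the single $\chi_\Phi$-recurrence relation $a+c_n\bar a=0$, and the choice $a=\chi_\Phi(-1)$ is made precisely to satisfy it via $\chi_\Phi(-1)=-c_n\overline{\chi_\Phi(-1)}$. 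Your explanations (``middle of the sequence collides with itself'', ``$a_0$ proportional to $\chi_\Phi(-1)\cdot a$'') do not capture these mechanisms.

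The same $\mu_\Phi$-recurrence idea also handles the necessity of \eqref{4.adlw}: under the bad hypothesis $p_\Phi(x)=x+(-1)^{n+1}$ with trivial involution, a binomial-coefficient identity shows the central fragment \eqref{ser28} is automatically $\mu_\Phi$-recurrent, whence $A$ is singular. Your alternative route through $A=A^TJ_n((-1)^n)$ can be made to work, but ``forces $A$ to be strictly triangular'' is not literally correct (already for $n=2$ one gets $A=\mathrm{diag}(0,d)$), so that step would need a more careful inductive argument on the diagonals.
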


\begin{proof} (a)
Let $\Phi\in\mathbb
F^{n\times n}$ be
nonsingular and
indecomposable for
similarity. We prove
here that if
$\sqrt[\displaystyle
*]{\Phi}$ exists then
the conditions
\eqref{lbdr} and
\eqref{4.adlw} are
satisfied; we prove
the converse statement
in (b).

Suppose
$A:=\sqrt[\displaystyle
*]{\Phi}$ exists.
Since
\begin{equation}
 \label{mau3}
A = A^*\Phi=
\Phi^*A\Phi,
\end{equation}
we have $A\Phi
A^{-1}=\Phi^{-*}$ and
\begin{align*}
\chi_{\Phi}(x) &=
\det(xI-\Phi^{-*})=
\det(xI-\bar\Phi^{-1})=
\det((-\bar\Phi^{-1})(I
-x\bar\Phi))=
\\&=\det(-\bar\Phi^{-1})\cdot
x^n\cdot \det(x^{-1}I
-\bar\Phi)=\chi_{\Phi}^{\vee}(x).
\end{align*}
In the notation
\eqref{ser24},
$p_{\Phi}(x)^s =
p_{\Phi}^{\vee}(x)^s$,
which verifies
\eqref{lbdr}.

It remains to prove
\eqref{4.adlw}.
Because of
\eqref{ndw}, we may
assume that $\Phi$ is
a nonsingular
Frobenius block
\eqref{3.lfo} with
characteristic
polynomial
\eqref{ser24lk}. If
$a_{ij}$ are the
entries of $A$, then
we define $a_{i,n+1}$
by $A\Phi= [a_{ij}]
\Phi=[a_{i,j+1}]$,
$a_{n+1,j}$ by
$\Phi^*A\Phi=
\Phi^*[a_{i,j+1}]=
[a_{i+1,j+ 1}]$; and
we then use
\eqref{mau3} to obtain
$[a_{ij}]= [a_{i+1,j+
1}]$. Hence the matrix
entries depend only on
the difference of the
indices and $A$ has
the form \eqref{okjd}
with
$a_{i-j}:=a_{ij}$.
That
$(a_{1-n},a_{2-n},\dots,a_{n-1})$
is
$\chi_{\Phi}$-recurrent
follows from
\begin{equation}\label{fol}
[a_{i-j}]\Phi=[a_{i-j-1}].
\end{equation}

In view of
\begin{equation}\label{lyf1}
\begin{aligned}
&\chi_{\Phi}(x) =x^n+
c_1x^{n-1}+\dots+c_{n-1}x+c_n
\\=&\chi_{\Phi}^{\vee}(x)=
\bar c_n^{-1}(\bar
c_nx^n+ \bar
c_{n-1}x^{n-1}
+\dots+\bar c_1x+1),
\end{aligned}
\end{equation}
the vector $(\bar
a_{n-1},\dots,\bar
a_{1-n})$ is
$\chi_{\Phi}$-recurrent,
so
$
[a_{i-j}]= A= A^*\Phi
= [\bar a_{j-i+1}],
$
and we have
\begin{equation}\label{ser27}
(a_{1-n},\dots,a_{n-1})=
(a_{1-n},\dots,a_0,
\bar a_0,\dots, \bar
a_{2-n}).
\end{equation}
Since this vector is
$\chi_{\Phi}$-recurrent,
it is completely
determined by the
fragment
\begin{equation}\label{ser28}
(a_{1-m},\dots, a_0,
\bar a_0,\dots, \bar
a_{1-m})
\end{equation}
of length $2m$ defined
in \eqref{leg}.

Write
\begin{equation}\label{ser25a}
\mu_{\Phi}(x):=
p_{\Phi}(x)^{s-1}=x^t+
b_1x^{t-1}+\dots+b_t,\qquad
b_0:=1.
\end{equation}

Suppose that
\eqref{4.adlw} is not
satisfied; i.e., the
involution is the
identity and
$p_{\Phi}(x)= x +
(-1)^{n-1}$. Let us
prove that
\begin{equation}\label{kyyyd}
\text{the vector
\eqref{ser28} is
$\mu_{\Phi}(x)$-recurrent.
}
\end{equation}

If $n = 2m$ then
$\mu_{\Phi}(x)
=(x-1)^{2m-1}$ and
\eqref{kyyyd} is
obvious.

Let $n = 2m - 1$. Then
the coefficients of
$\chi_{\Phi}(x)
=(x+1)^{n}$ in
\eqref{ser24lk} and
$\mu_{\Phi}(x)
=(x+1)^{n-1}$ in
\eqref{ser25a} are
binomial coefficients:
\[
c_i=\binom ni,\qquad
b_i=\binom {n-1}i.
\]
Standard identities
for binomial
coefficients ensure
that
\[
c_i=b_i+b_{i-1}=
b_i+b_{n-i},\qquad
0<i<n.
\]
Thus \eqref{kyyyd}
follows since
\begin{equation*} \label{ser29}
\begin{split}
 2[b_0a_{1-m} &+
 b_1a_{2-m}+\dots
 +b_{n-2}a_{3-m}
 +b_{n-1}a_{2-m}]\\
&=(b_0+0)a_{1-m}+
 (b_1+b_{n-1})a_{2-m}
 +
 (b_2+b_{n-2})a_{3-m}\\
 &\qquad +\dots
 +(b_{n-1}+b_1)a_{2-m}
 +(0+b_0)a_{1-m}\\
 &=c_0a_{1-m}+
 c_1a_{2-m}+\dots+
 c_na_{1-m}=0
\end{split}
\end{equation*}
in view of the
$\chi_{\Phi}$-recurrence
of (\ref{ser28}). But
then the
$\mu_{\Phi}$-recurrent
extension of
\eqref{ser28}
coincides with
\eqref{ser27} and we
have
$$(0,\dots,0,b_0,\dots,b_t)
A= 0$$ (see
\eqref{ser25a}), which
contradicts our
assumption that $A$ is
nonsingular.

(b) Let $\Phi$ be a
nonsingular Frobenius
block \eqref{3.lfo}
with characteristic
polynomial
\eqref{ser24lk}
satisfying
\eqref{lbdr} and
\eqref{4.adlw}.

We first prove the
nonsingularity of
every Toeplitz matrix
$A:=[a_{i-j}]$ whose
vector of entries
\begin{equation}\label{oyuyf}
(a_{1-n},a_{2-n},\dots,a_{n-1})
\end{equation}
is
$\chi_{\Phi}$-recurrent
(and so \eqref{fol}
holds) but is not
$\mu_{\Phi}$-recurrent.
If $w:=
(a_{n-1},\dots, a_0)$
is the last row of
$A$, then
\begin{equation}\label{ldyf}
w\Phi^{n-1},\
w\Phi^{n-2},\dots, w
\end{equation}
are all the rows of
$A$ by \eqref{fol}. If
they are linearly
dependent, then
$wf(\Phi) = 0$ for
some nonzero
polynomial $f(x)$ of
degree less than $n$.
If $p_{\Phi}(x)^r$ is
the greatest common
divisor of $f(x)$ and
$\chi_{\Phi}(x)=p_{\Phi}(x)^s$,
then $r<s$ and
$$p_{\Phi}(x)^r=f(x)g(x)+
\chi_{\Phi}(x)h(x)\qquad
\text{for some
}g(x),h(x)\in\mathbb
F[x].$$ Since
$wf(\Phi) = 0$ and
$w\chi_{\Phi}(\Phi) =
0$, we have
$wp_{\Phi}(\Phi)^r =
0$. Thus,
$w\mu_{\Phi}(\Phi) =
0$. Because
\eqref{ldyf} are the
rows of $A$,
\begin{align*}
(0,\dots,0,&b_0,\dots,b_t,
\underbrace{0,\dots,0}_{i})A\\
&=b_0w\Phi^{i+t}+
b_1w\Phi^{i+t-1}+\dots
+b_tw\Phi^{i} =
w\Phi^i\mu_{\Phi}(\Phi)=0
\end{align*}
for each $i=0,1,\dots,
n-t-1$. Hence,
\eqref{oyuyf} is
$\mu_{\Phi}$-recurrent,
a contradiction.

Finally, we must show
that \eqref{ksy} is
$\chi_{\Phi}$-recurrent
but not
$\mu_{\Phi}$-recurrent
(and so in view of
\eqref{lyf1} its
$\chi_{\Phi}$-recurrent
extension has the form
\eqref{ser27}, which
ensures that $A=
[a_{j-i}] = A^*\Phi$
is nonsingular and can
be taken for
$\sqrt[\displaystyle
*]{\Phi}$).

Suppose first that $n
= 2m$. Since
\eqref{ksy} has length
$n$, it suffices to
verify that it is not
$\mu_{\Phi}$-recurrent.
This is obvious if
$\deg \mu_{\Phi}(x)<n
-1$. Let
$\deg\mu_{\Phi}(x)=n
-1$. Then
$\mu_{\Phi}(x)
=(x+c)^{n-1}$ for some
$c$ and we need to
show only that
\begin{equation}\label{nlp}
a + b_{n-1}\bar a = a
+ c^{n-1}\bar a\ne 0.
\end{equation}
If $c ^{n-1}\ne -1$
then by \eqref{mag}
$a=1$ and so
\eqref{nlp} holds. Let
$c ^{n-1}= -1$. If the
involution on $\mathbb
F$ is the identity
then by \eqref{lbdr}
$c=\pm 1$ and so
$c=-1$, contrary to
\eqref{4.adlw}. Hence
the involution is not
the identity,
$a=e-\bar e$, and
\eqref{nlp} is
satisfied.

Now suppose that $n =
2m - 1$. Since
\eqref{ksy} has length
$n + 1$, it suffices
to verify that it is
$\chi_{\Phi}$-recurrent,
i.e., that
\begin{equation}\label{uuv}
a +c_{n}\bar a= 0.
\end{equation}
By \eqref{lyf1}, $c_n
=\bar c_n^{-1}$.
Because
$\chi_{\Phi}(x)
=\chi_{\Phi}^{\vee}(x)
= \bar c_n^{-1}x^n\bar
\chi_{\Phi}(x^{-1}),$
we have
$$\chi_{\Phi}(-1) =
-c_n\overline{
\chi_{\Phi}({-1})}.$$
If $p_{\Phi}(x)\ne
x+1$ then
$a=\chi_{\Phi}(-1)\ne
0$ and \eqref{uuv}
holds. If
$p_{\Phi}(x)= x+1$
then the involution on
$\mathbb F$ is not the
identity by
\eqref{4.adlw}. Hence
$a=e-\bar e$ and
\eqref{uuv} is
satisfied.
\end{proof}

\section{Reduction
theorems for systems
of forms and linear
mappings
}\label{sub_pos1}

Classification
problems for systems
of forms and linear
mappings can be
formulated in terms of
representations of
graphs with
nonoriented, oriented,
and doubly oriented
($\longleftrightarrow$)
edges; the notion of
quiver representations
was extended to such
representations in
\cite{ser_first}. In
this section we give a
brief summary of
definitions and
theorems about such
representations; for
the proofs and a more
detailed exposition we
refer the reader to
\cite{ser_izv} and
\cite{ser_iso}. For
simplicity, we
consider
representations of
graphs without doubly
oriented edges.

Let $\mathbb F$ be a
field or skew field
with involution
$a\mapsto \bar{a}$
(possibly, the
identity). A {\it
sesquilinear form} on
right vector spaces
$U$ and $V$ over
$\mathbb F$ is a
mapping $B\colon
U\times V\to \mathbb
F$ satisfying
\[
  B(ua+u'a',v)=
  \bar{a}B(u,v)+
  \bar{a'}B(u',v)
\]
and
\[
  B(u,va+v'a')
  =B(u,v)a+B(u,v')a'
\]
for all $u,u'\in U$,
$v,v'\in V$, and
$a,a'\in \mathbb F$.
This form is
\emph{bilinear} if the
involution
$a\mapsto\bar{a}$ is
the identity. If
$e_1,\dots,e_m$ and
$f_1,\dots, f_n$ are
bases of $U$ and $V$,
then
$B(u,v)=[u]_e^{*}B_{ef}[v]_f$
for all $u\in U$ and
$v\in V$, in which
$[u]_e$ and $[v]_f$
are the coordinate
vectors and $
B_{ef}:=[B(e_i,f_j)]$
is the matrix of $B$.
Its matrix in other
bases is $R^*B_{ef}S$,
in which $R$ and $S$
are the transition
matrices.

A \emph{pograph}
(partially ordered
graph) is a graph in
which every edge is
nonoriented or
oriented; for example,
\begin{equation}\label{2.6}
\raisebox{20pt}{\xymatrix{
 &{1}&\\
 {2}\ar@(ul,dl)@{-}_{\mu}
 \ar@{-}[ur]^{\lambda}
 \ar@/^/[rr]^{\beta}
 \ar@/_/@{-}[rr]_{\nu} &&{3}
 \ar[ul]_{\alpha}
 \ar@(ur,dr)^{\gamma}
 }}
\end{equation}
We suppose that the
vertices are
$1,2,\dots,n$, and
that there can be any
number of edges
between any two
vertices.

A {\it representation}
${\cal A}$ of a
pograph $P$ over
$\mathbb F$ is given
by assigning to each
vertex $i$ a right
vector space ${\cal
A}_i$ over $\mathbb
F$, to each arrow
$\alpha\colon i\to j$
a linear mapping
${\cal
A}_{\alpha}\colon
{\cal A}_i\to {\cal
A}_j$, and to each
nonoriented edge
$\lambda\colon i\lin\,
j\ (i\le j)$ a
sesquilinear form
${\cal
A}_{\lambda}\colon
{\cal A}_i\times {\cal
A}_j\to {\mathbb F}$.

For example, each
representation of the
pograph \eqref{2.6} is
a system
\begin{equation*}\label{2.6aa}
{\cal
A}:\quad\raisebox{20pt}{\xymatrix{
 &{{\cal A}_1}&\\
 {{\cal A}_2}
\save !<-3mm,0cm>
\ar@(ul,dl)
 @{-}_{{\cal A}_{\mu}}
 \restore
\ar@{-}[ur]^{{\cal
A}_{\lambda}}
\ar@/^/[rr]^{{\cal
A}_{\beta}}
\ar@/_/@{-}[rr]_{{\cal
A}_{\nu}} &&{{\cal
A}_3} \ar[ul]_{{\cal
A}_{\alpha}}
 \save
!<3mm,0cm>
\ar@(ur,dr)^{{\cal
A}_{\gamma}}
 \restore
 }}
\end{equation*}
of vector spaces
${\cal A}_1,{\cal
A}_2,{\cal A}_3$ over
$\mathbb F$, linear
mappings ${\cal
A}_{\alpha}$, ${\cal
A}_{\beta}$, ${\cal
A}_{\gamma}$, and
forms $ {\cal
A}_{\lambda}\colon
{\cal A}_1\times {\cal
A}_2\to {\mathbb F}$,
${\cal A}_{\mu}\colon
{\cal A}_2\times {\cal
A}_2\to {\mathbb F}$,
${\cal A}_{\nu}\colon
{\cal A}_2\times {\cal
A}_3\to {\mathbb F}.$

A {\it morphism}
$f=(f_1,\dots,f_n)\colon
{\cal A}\to{\cal A}'$
of representations
${\cal A}$ and ${\cal
A}'$ of $P$ is a set
of linear mappings
$f_i\colon {\cal
A}_i\to {\cal A}'_i$
that transform $\cal
A$ to ${\cal A}'$;
this means that
\[ f_j{\cal
A}_{\alpha}= {\cal
A}'_{\alpha}{f}_i,\qquad
{\cal A}_{\lambda
}(x,y)= {\cal
A}'_{\lambda }
(f_ix,f_jy)\] for all
arrows $ \alpha\colon
i\longrightarrow j$
and nonoriented edges
$\lambda \colon i\lin
j\ (i\le j)$. The
composition of two
morphisms is a
morphism. A morphism
$f\colon{\cal
A}\to{\cal A}'$ is
called an {\it
isomorphism} and is
denoted by $f\colon
{\cal A}\is{\cal A}'$
if all $f_i$  are
bijections. We write
${\cal A}\simeq{\cal
A}'$ if ${\cal A}$ and
${\cal A}'$ are
isomorphic.

The {\it direct sum}\/
${\cal A}\oplus{\cal
A}'$ of
representations ${\cal
A}$ and ${\cal A}'$ of
$P$ is the
representation
consisting of the
vector spaces ${\cal
A}_i\oplus {\cal
A}'_i$, the linear
mappings ${\cal
A}_{\alpha}\oplus
{\cal A}'_{\alpha}$,
and the forms $ {\cal
A}_{\lambda }\oplus
{\cal A}'_{\lambda }$
for all vertices $i$,
arrows $\alpha$, and
nonoriented edges
$\lambda $. A
representation $\cal
A$ is {\it
indecomposable} if
${\cal A}\simeq {\cal
B}\oplus{\cal C}$
implies ${\cal B}=0$
or ${\cal C}=0$, where
$0$ is the
representation in
which all vector
spaces are $0$.

The {\it *dual space}
to a vector space $V$
is the vector space
$V^{*}$ of all
mappings $\varphi:V\to
\mathbb F$ that are
\emph{semilinear},
this means that
$$
\varphi(va+v'a')=\bar{a}(\varphi
v)+ \bar{a'}(\varphi
v'),\qquad v,v'\in V,\
\ a,a'\in \mathbb F.
$$
We identify $V$ with
$V^{**}$ by
identifying $v\in V$
with $\varphi\mapsto
\overline{\varphi v}$.
For every linear
mapping $A:U\to V$, we
define the {\it
*adjoint mapping}
$A^{*}\colon V^{*}\to
U^{*}$ setting
$A^{*}\varphi:=\varphi
A$ for all $\varphi\in
V^*.$

For every pograph
${P}$, we construct
the \emph{quiver
$\underline{P}$ with
an involution on the
set of vertices and an
involution on the set
of arrows} as follows:
we replace
\begin{itemize}
  \item
each vertex $i$ of
${P}$ by two vertices
$i$ and $i^*$,
  \item
each oriented edge
$\alpha\colon i\to j$
by two arrows
$\alpha\colon i\to j$
and $\alpha^*\colon
j^*\to i^*$,
  \item
each nonoriented edge
$\lambda\colon k\lin\,
l\ (k\le l)$ by two
arrows $\alpha\colon
l\to k^*$ and
$\alpha^*\colon k\to
l^*$,
\end{itemize}
and set $u^{**}:=u$
and
$\alpha^{**}:=\alpha$
for all vertices and
arrows of the quiver
$\underline{P}$. For
example,
\begin{equation}\label{4.1}
\raisebox{20pt}{\xymatrix@R=4pt{
 &{2}\ar[dd]_{\alpha}
 \ar@{-}@/^/[dd]^{\lambda}\\
{P}:&
  \\
 &{1} \ar@(ur,dr)@{-}^{\mu}}}
\qquad\qquad\qquad
\raisebox{23pt}{\xymatrix@R=4pt{
 &{2}\ar[dd]_{{\alpha}}
 \ar[ddrr]^(.25){{\lambda}}&
 &{2^*} \\
 {\underline{P}:}&&\\
 &{1}\ar[uurr]^(.75){{\lambda}^*}
 \ar@<0.4ex>[rr]^{{\mu}}
 \ar@<-0.4ex>[rr]_{{\mu}^*}
 &&{1^*}\ar[uu]_{{\alpha}^*}
 }}
\end{equation}

Respectively, for each
representation $\cal
M$ of $P$ over
$\mathbb F$, we define
the representation
$\underline {\cal M}$
of $\underline {P}$ by
replacing
\begin{itemize}
  \item
each vector space $V$
in ${\cal M}$ by the
pair of spaces $V$ and
$V^*$,
  \item
each linear mapping
$A\colon U\to V$ by
the pair of mutually
*adjoint mappings
$A\colon U\to V$ and
$A^*\colon V^*\to
U^*$,

  \item
each sesquilinear form
$B\colon V\times
U\to\mathbb F$ by the
pair of mutually
*adjoint mappings
\begin{equation*}\label{mse}
B\colon u\in U\mapsto
B(?,u)\in V^*,\qquad
B^*\colon v\in
V\mapsto
\overline{B(v,?)}\in
U^*.
\end{equation*}
\end{itemize}

For example, the
following are
representations of
\eqref{4.1}:
\begin{equation}\label{4.2}
\raisebox{23pt}{\xymatrix@R=4pt{
 &{U}\ar[dd]_{A}
 \ar@{-}@/^/[dd]^{B}\\
{\cal A}:&
  \\
 &{V}
\save !<1mm,0cm>
 \ar@(ur,dr)@{-}^{C}
 \restore }}
   \qquad   \qquad   \qquad
\raisebox{23pt}{\xymatrix@R=4pt{
 &{U}\ar[dd]_{A}
 \ar[ddrr]^(.25){B}&
 &{U^{\star}} \\
 {\underline{\cal A}:}&&\\
 &{V}\ar[uurr]^(.75){B^{\star}}
 \ar@<0.4ex>[rr]^{C}
 \ar@<-0.4ex>[rr]_{C^{\star}}
 &&{V^{\star}}\ar[uu]_{A^{\star}}
 }}
\end{equation}

For each
representation $\cal
M$ of $\underline {P}$
we define an
\emph{adjoint
representation} ${\cal
M}^{\circ}$ of
$\underline {P}$
consisting of the
vector spaces ${\cal
M}^{\circ}_v:={\cal
M}^*_{v^*}$ and the
linear mappings ${\cal
M}^{\circ}_{\alpha}:={\cal
M}^*_{\alpha^*}$ for
all vertices $v$ and
arrows $\alpha$ of
$\underline {P}$. For
example, the following
are representations of
the quiver
${\underline{P}}$
defined in
\eqref{4.1}:
\[
\xymatrix@R=4pt{
 &{U_1}\ar[dd]_{A_1}
 \ar[ddrr]^(.25){B_1}&
 &{U_2} \\
 {{\cal M}:}&&\\
 &{V_1}\ar[uurr]^(.75){B_2}
 \ar@<0.4ex>[rr]^{C_1}
 \ar@<-0.4ex>[rr]_{C_2}
 &&{V_2}\ar[uu]_{A_2}
 }
     \qquad\quad
 \xymatrix@R=4pt{
 &{U_2^{\star}}\ar[dd]_{A_2^{\star}}
 \ar[ddrr]^(.25){B_2^{\star}}&
 &{U_1^{\star}} \\
 {{\cal M}^{\circ}:}&&\\
 &{V_2^{\star}}
 \ar[uurr]^(.75){B_1^{\star}}
 \ar@<0.4ex>[rr]^{C_2^{\star}}
 \ar@<-0.4ex>[rr]_{C_1^{\star}}
 &&{V_1^{\star}}\ar[uu]_{A_1^{\star}}
 }
\]
The second
representation in
\eqref{4.2} is
\emph{selfadjoint}:
$\underline{\cal
A}^{\circ}
=\underline{\cal A}$.

In a similar way, for
each morphism $f\colon
{\cal M}\to{\cal N}$
of representations of
$\underline{P}$ we
construct the {\it
adjoint morphism}
\begin{equation}\label{kdtc}
f^{\circ}\colon {\cal
N}^{\circ}\to{\cal
M}^{\circ},\qquad
\text{in which }\
f^{\circ}_i:=f^*_{i^*}
\end{equation}
for all vertices $i$
of $\underline {P}$.
An isomorphism
$f\colon {\cal
M}\is{\cal N}$ of
selfadjoint
representations ${\cal
M}$ and ${\cal N}$ is
called a {\it
congruence} if
$f^{\circ}=f^{-1}$.

For each isomorphism
$f\colon {\cal
A}\is{\cal B}$ of
representations of a
pograph ${P}$, we
define the
\emph{congruence}
$\underline{f}\colon
\underline{\cal A}\is
\underline{\cal B}$ of
the corresponding
selfadjoint
representations of
$\underline{P}$ by
defining:
\begin{equation*}\label{ldi}
\underline{f}_{\,i}:=f_i,
\quad
\underline{f}_{\,i^*}:=f_i^{-*}
\quad\text{for each
vertex $i$ of $P$.}
\end{equation*}

Two representations
$\cal A$ and $\cal B$
of a pograph $P$ are
isomorphic if and only
if the corresponding
selfadjoint
representations
$\underline{\cal A}$
and $\underline{\cal
B}$ of the quiver
$\underline{P}$ are
congruent. Therefore,
\emph{the problem of
classifying
representations of a
pograph $P$ up to
isomorphism reduces to
the problem of
classifying
selfadjoint
representations of the
quiver $\underline{P}$
up to congruence.}

Let us show how to
solve the latter
problem if we know a
maximal set $\ind
(\underline{P})$ of
nonisomorphic
indecomposable
representations of the
quiver $\underline{P}$
(this means that every
indecomposable
representation of
$\underline{P}$ is
isomorphic to exactly
one representation
from $\ind
(\underline{P})$). We
first replace each
representation in
$\ind (\underline{P})$
that is {\it
isomorphic} to a
selfadjoint
representation by one
that is {\it actually}
selfadjoint---i.e.,
has the form
$\underline{\cal A}$,
and denote the set of
these $\underline{\cal
A}$ by
$\ind_0(\underline{P})$.
Then in each of the
one- or two-element
subsets
$$
\{{\cal M},{\cal
L}\}\subset\ind
(\underline{P})
\smallsetminus
\ind_0(\underline{P})
\quad \text{such that
}{\cal
M}^{\circ}\simeq {\cal
L},
$$
we select one
representation and
denote the set of
selected
representations by
$\ind_1(\underline{P})$.
We obtain a new set
$\ind (\underline{P})$
that we partition into
3 subsets:
\begin{equation}\label{4.8d}
{\ind (\underline{P})}
=
\begin{tabular}{|c|c|}
 \hline &\\[-12pt]
  $\; {\cal M}\;  $&
  ${\cal M}^{\circ}\text{ (if
  ${\cal M}^{\circ}\not
  \simeq{\cal M})$}$\\
  \hline
\multicolumn{2}{|c|}
{$\underline{\cal A}\vphantom{{\hat{N}}}$}\\
 \hline
\end{tabular}\,,\;
\begin{matrix}
 {\cal M}\in
\ind_1(\underline{P}),\\[1pt]
\underline{\cal
A}\in\ind_0(\underline{P}).
\end{matrix}
\end{equation}

For each
representation ${\cal
M}$ of
$\underline{P}$, we
define a
representation ${\cal
M}^+$ of $P$ by
setting $ {\cal
M}^+_i:={\cal
M}_i\oplus {\cal
M}_{i^*}^*$ for all
vertices $i$ in $P$
and
\begin{equation}\label{piyf}
{\cal M}^+_{\alpha }:=
\begin{bmatrix} {\cal
M}_{\alpha}&0\\0&{\cal
M}_{\alpha^*}^{*}
 \end{bmatrix},\qquad
{\cal M}^+_{\beta}:=
 \begin{bmatrix}
   0&{\cal
M}_{\beta^*}^{*}\\{\cal
M}_{\beta}&0
 \end{bmatrix}
\end{equation}
for all edges $
\alpha\colon
i\longrightarrow j$
and $\beta\colon i\lin
j\ (i\le j)$. The
representation ${\cal
M}^+$ arises as
follows: each
representation ${\cal
M}$ of $\underline P$
defines the
selfadjoint
representation ${\cal
M}\oplus {\cal
M}^{\circ}$; the
corresponding
representation of $P$
is ${\cal M}^+$ (and
so $\underline{\cal
M}^+={\cal M}\oplus
{\cal M}^{\circ}$).

For every
representation ${\cal
A}$ of ${P}$ and for
every selfadjoint
automorphism
$f=f^{\circ}\colon
\underline{\cal
A}\is\underline{\cal
A}$, we denote by
${\cal A}^f$ the
representation of $P$
that is obtained from
${\cal A}$ by
replacing each form
${\cal A}_{\beta}$
$(\beta\colon i\lin
j$, $i\le j)$ by
${\cal
A}^f_{\beta}:={\cal
A}_{\beta}f_j$.

Let ${\ind
(\underline{P})}$ be
partitioned as in
\eqref{4.8d}, and let
$\underline{\cal
A}\in{\ind_0
(\underline{P})}$. By
\cite[Lemma
1]{ser_izv}, the set
$R$ of noninvertible
elements of the
endomorphism ring
$\End (\underline{\cal
A})$ is the radical.
Therefore, $\mathbb
T({\cal A}):=\End
(\underline{\cal
A})/R$ is a field or
skew field, on which
we define the
involution
\begin{equation}\label{kyg}
(f+R)^{\circ}:=f^{\circ}+R.
\end{equation}

For each nonzero
$a=a^{\circ}\in
\mathbb T({\cal A})$,
we fix a selfadjoint
automorphism
\begin{equation}\label{lfw}
f_a=f_a^{\circ}\in
a,\quad\text{ and
define ${\cal
A}^a:={\cal A}^{f_a}$}
\end{equation}
(we can take
$f_a:=(f+f^{\circ})/2$
for any $f\in a$). The
set of representations
${\cal A}^a$ is called
the \emph{orbit of}
${\cal A}$.

For each Hermitian
form
\[
\varphi(x)=x^{\circ}_1a_1x_1+\dots+
x^{\circ}_ra_rx_r,\qquad
0\ne
a_i=a_i^{\circ}\in
\mathbb T({\cal A}),
\]
we write
\[
{\cal
A}^{\varphi(x)}:=
{\cal
A}^{a_1}\oplus\dots\oplus
{\cal A}^{a_r}.
\]

The following theorem
is a special case of
\cite[Theorem
1]{ser_izv} (or
\cite[Theorem
3.1]{ser_iso}).
\begin{theorem}
\label{tetete1} Over a
field or skew field\/
$\mathbb F$ of
characteristic
different from $2$
with involution
$a\mapsto \bar{a}$
$($possibly, the
identity$)$, every
representation of a
pograph $P$ is
isomorphic to a direct
sum
\begin{equation*}\label{iap}
{\cal
M}_1^+\oplus\dots\oplus
{\cal M}_p^+\oplus
{\cal
A}_1^{\varphi_1(x)}\oplus
\dots\oplus {\cal
A}_q^{\varphi_q(x)},
\end{equation*}
in which
\[
{\cal M}_i\in
\ind_1(\underline{P}),\qquad
\underline{\cal
A}_j\in
\ind_0(\underline{P}),
\]
and ${\cal A}_j\ne
{\cal A}_{j'}$ if
$j\ne j'$. This sum is
determined by the
original
representation
uniquely up to
permutation of
summands and
replacement of ${\cal
A}_j^{\varphi_j(x)}$
by ${\cal
A}_j^{\psi_j(x)}$, in
which ${\varphi_j(x)}$
and ${\psi_j(x)}$ are
equivalent Hermitian
forms over $\mathbb
T({\cal A}_j)$ with
involution
\eqref{kyg}. \eprf
\end{theorem}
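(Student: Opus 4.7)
The plan is to reduce the statement to the Krull--Schmidt theorem for the ordinary quiver $\underline{P}$ by exploiting the correspondence, established earlier in the section, between representations of $P$ up to isomorphism and selfadjoint representations of $\underline{P}$ up to congruence. Given a representation $\cal A$ of $P$, I pass to $\underline{\cal A}$ and apply Krull--Schmidt on $\underline{P}$ to write
\[
\underline{\cal A}\simeq {\cal N}_1\oplus\cdots\oplus {\cal N}_N
\]
with each ${\cal N}_k$ indecomposable. Selfadjointness of $\underline{\cal A}$ forces the multiset $\{{\cal N}_k\}$ to be stable under $(-)^\circ$ (by the uniqueness part of Krull--Schmidt applied to $\underline{\cal A}\simeq\underline{\cal A}^\circ$).

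Next, I partition the summands according to the trichotomy \eqref{4.8d}. The ${\cal N}_k$ with ${\cal N}_k^\circ\not\simeq {\cal N}_k$ come in pairs $\{{\cal N}_k,{\cal N}_k^\circ\}$; each such pair is congruent (as a selfadjoint representation) to ${\cal M}\oplus{\cal M}^\circ=\underline{{\cal M}^+}$ for the distinguished ${\cal M}\in\ind_1(\underline{P})$ in the pair, yielding the ${\cal M}_i^+$ summands. The remaining summands have ${\cal N}_k\simeq \underline{\cal A}_j$ for some $\underline{\cal A}_j\in\ind_0(\underline{P})$; grouping these by the index $j$, say with multiplicity $r_j$, the residual problem is to classify, up to congruence, the selfadjoint representations of $\underline{P}$ whose underlying representation is $\underline{\cal A}_j^{\oplus r_j}$.

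The main obstacle is this residual classification, and it is handled by descending to the endomorphism ring. Let $E_j=\End(\underline{\cal A}_j)$ and let $R_j\subset E_j$ be its radical, which by the cited lemma of \cite{ser_izv} coincides with the set of non-invertible endomorphisms. Since $f\mapsto f^\circ$ swaps $\End(\underline{\cal A}_j)$ with itself (as $\underline{\cal A}_j$ is selfadjoint) and preserves invertibility, the rule \eqref{kyg} descends to an involution on the (skew) field $\mathbb{T}({\cal A}_j)=E_j/R_j$. Every selfadjoint automorphism of $\underline{\cal A}_j^{\oplus r_j}$ can be written as an $r_j\times r_j$ matrix $[f_{kl}]$ with $f_{kl}\in E_j$ satisfying $f_{kl}^\circ=f_{lk}$; reducing entries modulo $R_j$ and exploiting that the reduction is surjective with invertible lifts of invertible images, one checks that two such automorphisms are related by an $\underline{\cal A}_j^{\oplus r_j}$-congruence if and only if the corresponding Hermitian matrices over $(\mathbb{T}({\cal A}_j),\circ)$ are equivalent. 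Diagonalizing these Hermitian forms (possible since $\operatorname{char}\mathbb{F}\ne 2$) and selecting the representatives $f_a$ from \eqref{lfw} converts each orbit into a direct sum ${\cal A}_j^{\varphi_j(x)}$ of the desired shape.

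For uniqueness, the multiplicities of the ${\cal M}_i^+$ and of the underlying ${\cal A}_j$ are read off from the Krull--Schmidt decomposition of $\underline{\cal A}$, hence are invariants of $\cal A$. The Hermitian form $\varphi_j(x)$ attached to ${\cal A}_j$ is determined by the orbit computation above, which identifies it up to equivalence over $(\mathbb{T}({\cal A}_j),\circ)$; this is precisely the indeterminacy asserted in the theorem. The delicate point throughout is the bookkeeping needed to verify that the involution on $E_j$ preserves the radical and induces a well-defined involution on the quotient skew field, so that Hermitian-form machinery becomes applicable; everything else is Krull--Schmidt combined with the $\cal A\leftrightarrow \underline{\cal A}$ dictionary.
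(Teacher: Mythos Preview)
The paper does not prove this theorem at all: it is quoted as a special case of \cite[Theorem~1]{ser_izv} (or \cite[Theorem~3.1]{ser_iso}) and the statement ends with a bare QED symbol. So there is no in-paper argument to compare against; the relevant question is whether your sketch stands on its own.

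Your outline has the right ingredients (Krull--Schmidt for $\underline{P}$, stability of the multiset of summands under $(-)^\circ$, the involution on $\End(\underline{\cal A}_j)/R_j$, reduction of the isotypic piece to a Hermitian form over $\mathbb{T}({\cal A}_j)$), but the central step is missing. Krull--Schmidt gives only an \emph{isomorphism} $\underline{\cal A}\simeq\bigoplus_k{\cal N}_k$, whereas the theorem requires a \emph{congruence} to the stated direct sum. You write that ``each such pair $\{{\cal N}_k,{\cal N}_k^\circ\}$ is congruent (as a selfadjoint representation) to $\underline{{\cal M}^+}$'' and then pass to ``the residual problem'' on $\underline{\cal A}_j^{\oplus r_j}$; but neither ${\cal N}_k\oplus{\cal N}_k^\circ$ nor the isotypic block $\underline{\cal A}_j^{\oplus r_j}$ is, a priori, a selfadjoint direct summand of $\underline{\cal A}$---the decomposition you started from is not orthogonal for the selfadjoint structure, so these pieces do not yet carry any selfadjoint structure to speak of. Upgrading the Krull--Schmidt isomorphism to a congruence-compatible (orthogonal) splitting is exactly the substance of the theorem: one needs a hyperbolic splitting lemma (roughly, if a selfadjoint $\underline{\cal B}$ is isomorphic as a plain representation to ${\cal M}\oplus{\cal M}^\circ\oplus{\cal C}$, then $\underline{\cal B}$ is congruent to $\underline{{\cal M}^+}\oplus\underline{\cal C}'$ for some selfadjoint $\underline{\cal C}'$ with $\underline{\cal C}'\simeq{\cal C}$), together with an analogous argument inside each isotypic component. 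That is what the proof in \cite{ser_izv} provides; your remark that ``the delicate point throughout is the bookkeeping'' on the radical misidentifies where the real work lies.
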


Theorem \ref{tetete1}
implies the following
generalization of the
law of inertia for
quadratic forms.

\begin{theorem}[{\cite%
[Theorem
3.2]{ser_iso}}]
 \label{tetete}
Let $\mathbb F$ be
either
\begin{itemize}
  \item[\rm(i)]
an algebraically
closed field of
characteristic
different from $2$
with the identity
involution, or

  \item[\rm(ii)]
an algebraically
closed field with
nonidentity
involution, or

  \item[\rm(iii)]
a real closed field,
or the skew field of
quaternions over a
real closed field.
\end{itemize}

Then every
representation of a
pograph $P$ over
$\mathbb F$ is
isomorphic to a direct
sum, uniquely
determined up to
permutation of
summands, of
representations of the
types:
\begin{equation}\label{do}
{\cal M}^+,\
  \begin{cases}
   {\cal
A}& \text{if ${\cal
A}^{-}\simeq{\cal
A}$}, \\
{\cal A},\ {\cal
A}^{-} & \text{if
${\cal
A}^{-}\not\simeq{\cal
A}$},
  \end{cases}
\end{equation}
in which ${\cal M}\in
\ind_1(\underline{P})$
and $\underline{\cal
A}\in\ind_0(\underline{P})$.
In the respective
cases {\rm(i)--(iii)},
the representations
\eqref{do} have the
form
\begin{itemize}
  \item[\rm(i)]
${\cal M}^+$, ${\cal
A}$,

  \item[\rm(ii)]
${\cal M}^+$, ${\cal
A}$, ${\cal A}^{-}$,

\item[\rm(iii)] $
{\cal M}^+,
  \begin{cases}
    \ \ {\cal A}, &
\parbox[t]{270pt}{if $\mathbb
T({\cal A})$ is an
algebraically closed
field with the
identity involution or
a skew field of
quaternions with
involution different
from quaternionic
conjugation,} \\
    {\cal A},{\cal A}^{-}, &
    \text{otherwise}.
  \end{cases}
$
\end{itemize}
\end{theorem}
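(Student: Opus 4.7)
\textbf{Proof plan for Theorem \ref{tetete}.} The strategy is to apply Theorem~\ref{tetete1} to reduce the problem to a classification of Hermitian forms over $\mathbb{T}(\mathcal{A})$ and then to invoke the law of inertia in its classical form over each of the division rings that actually arises.

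The plan is to begin by invoking Theorem~\ref{tetete1}, which already gives a direct sum decomposition
\[
\mathcal{M}_1^+\oplus\dots\oplus\mathcal{M}_p^+\oplus
\mathcal{A}_1^{\varphi_1(x)}\oplus\dots\oplus\mathcal{A}_q^{\varphi_q(x)}
\]
with $\mathcal{A}_j$ distinct and unique up to equivalence of the Hermitian forms $\varphi_j$ over $\mathbb{T}(\mathcal{A}_j)$ equipped with the involution \eqref{kyg}. The summands $\mathcal{M}_i^+$ already appear in the claimed form, so the task reduces to analyzing the summand $\mathcal{A}^{\varphi(x)}$ for a single $\underline{\mathcal{A}}\in\ind_0(\underline{P})$. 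If one shows that, over each of the ground rings in (i)--(iii), every Hermitian form $\varphi$ over $\mathbb{T}(\mathcal{A})$ decomposes as $x_1^{\circ}x_1+\dots+x_p^{\circ}x_p-x_{p+1}^{\circ}x_{p+1}-\dots-x_{p+q}^{\circ}x_{p+q}$ (a diagonal form with entries $\pm 1$), then by the definition of $\mathcal{A}^{\varphi(x)}$ one obtains
\[
\mathcal{A}^{\varphi(x)}\simeq\underbrace{\mathcal{A}\oplus\dots\oplus\mathcal{A}}_{p}\oplus\underbrace{\mathcal{A}^-\oplus\dots\oplus\mathcal{A}^-}_{q},
\]
which has the shape required by \eqref{do}.

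Next I would identify the possible $(\mathbb{T}(\mathcal{A}),\circ)$. Since $\End(\underline{\mathcal{A}})/R$ is a finite-dimensional division algebra over the center of $\mathbb{F}$, Frobenius' theorem (or Lemma~\ref{l00}) restricts $\mathbb{T}(\mathcal{A})$ to: $\mathbb{F}$ itself in cases (i) and (ii); and one of $\mathbb{P}$, $\mathbb{P}+\mathbb{P}i$, $\mathbb{H}$ in case (iii). The induced involution on $\mathbb{T}(\mathcal{A})$ is either the identity or essentially unique of its type, so the possibilities are a short list. Then I would apply the law of inertia on each: over an algebraically closed field with identity involution every nonzero element is a square, so $\varphi\sim\sum x_i^{\circ}x_i$ and $\mathcal{A}^-\simeq\mathcal{A}$ (the sign $-1$ is absorbed); over $\mathbb{P}+\mathbb{P}i$ with complex conjugation and over $\mathbb{H}$ with quaternionic conjugation the classical Sylvester/Hermitian law of inertia yields a signature $(p,q)$; over a real closed field $\mathbb{P}$ with identity involution symmetric forms have signature; over $\mathbb{H}$ with quaternionic semiconjugation a brief computation (or a citation to the classification used already in \cite{ser_iso,ser_quat}) shows all nonsingular Hermitian forms of the same rank are equivalent, so again $\mathcal{A}^-\simeq\mathcal{A}$.

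Finally I would verify the dichotomy $\mathcal{A}^-\simeq\mathcal{A}$ versus $\mathcal{A}^-\not\simeq\mathcal{A}$ in the list of \eqref{do}. By the definition of $\mathcal{A}^a$ in \eqref{lfw}, $\mathcal{A}^a\simeq\mathcal{A}^b$ is equivalent to $b=c^{\circ}ac$ for some invertible $c\in\mathbb{T}(\mathcal{A})$, so $\mathcal{A}^-\simeq\mathcal{A}$ iff $-1$ is a Hermitian norm in $\mathbb{T}(\mathcal{A})$. Matching this with the case analysis above yields exactly the list displayed in the statement: in case (i) only $\mathcal{A}$; in case (ii) both $\mathcal{A}$ and $\mathcal{A}^-$ when the induced involution on $\mathbb{T}(\mathcal{A})=\mathbb{F}$ is nontrivial and only $\mathcal{A}$ otherwise; and in case (iii) the split recorded in the theorem. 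The main obstacle is the bookkeeping in case (iii), specifically the case $\mathbb{T}(\mathcal{A})=\mathbb{H}$ with semiconjugation, where one must verify by an explicit norm calculation that every element of the fixed subfield is a Hermitian norm; all other subcases reduce to classical Sylvester-type results and are routine.
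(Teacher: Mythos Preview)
Your plan is correct and matches exactly the route the paper signals: the sentence preceding the theorem says explicitly that Theorem~\ref{tetete1} implies Theorem~\ref{tetete} as a ``generalization of the law of inertia,'' and the paper then defers the actual argument to \cite[Theorem~3.2]{ser_iso} rather than writing it out. So there is no in-paper proof to compare against beyond that one-line reduction, and your outline is precisely how that reduction goes.

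Two small remarks. First, in case~(ii) your caveat ``and only $\mathcal{A}$ otherwise'' is vacuous: since $\mathbb{T}(\mathcal{A})$ is a finite-dimensional division algebra over the algebraically closed field $\mathbb{F}$, it equals $\mathbb{F}$, and the involution \eqref{kyg} restricted to scalars is the original (nonidentity) involution on $\mathbb{F}$ because $(\lambda\cdot\mathrm{id})^{\circ}=\bar{\lambda}\cdot\mathrm{id}$; so $-1$ is never a Hermitian norm there and one always gets both $\mathcal{A}$ and $\mathcal{A}^{-}$. Second, your identification of the ``main obstacle'' is accurate: for $\mathbb{H}$ with semiconjugation the fixed set is $\mathbb{P}+\mathbb{P}j+\mathbb{P}k$ and one checks (either directly via $c^{\circ}c=(p^2+q^2-r^2-s^2)+2(pr+qs)j+2(ps-qr)k$ for $c=p+qi+rj+sk$, or by relating $\circ$-Hermitian forms to skew-Hermitian forms under quaternionic conjugation) that every nonzero fixed element is a norm; this is the only case not covered by the classical Sylvester law.
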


\begin{remark}\label{rema}
Theorem \ref{tetete}
is a special case of
Theorem 2 in
\cite{ser_izv}, which
was formulated
incorrectly in the
case of quaternions.
To correct it, remove
``or the algebra of
quaternions \dots'' in
a) and b) and add ``or
the algebra of
quaternions over a
maximal ordered
field'' in c). The
paper \cite{ser_quat}
is based on the
incorrect Theorem 2 in
\cite{ser_izv} and so
the signs $\pm$ of the
sesquilinear forms in
the indecomposable
direct summands in
\cite[Theorems
1--4]{ser_quat} are
incorrect. Correct
canonical forms are
given for
bilinear/sesquilinear
forms in Theorem
\ref{t1.1}, for pairs
of
symmetric/skew-symmetric
matrices in
\cite{rod_pair_nonst,rod_pair_stand},
for selfadjoint
operators in
\cite{kar}, and for
isometries in
\cite{ser_iso}.
\end{remark}

\section{Proof of Theorem
\ref{Theorem 5} }
\label{s_pro}

Each sesquilinear form
defines a
representation of the
pograph
\begin{equation}\label{jsos+}
\xymatrix{ P\,: &{1}
 \ar@(ur,dr)@{-}^{\alpha}
 }
\end{equation}
Its quiver is
\begin{equation*}\label{jso+}
\underline{
P}:\quad\xymatrix{
 {1}
  \ar@/^/@{->}[rr]^{\alpha}
 \ar@/_/@{->}[rr]_{\alpha^*} &&{1^*}
 }
\end{equation*}
We prove Theorem
\ref{Theorem 5} using
Theorem \ref{tetete1};
to do this, we first
identify in Lemma
\ref{lenhi} the sets
$\ind_1(\underline{P})$
and
$\ind_0(\underline{P})$,
and the orbit of $\cal
A$ for each
$\underline{\cal A}\in
\ind_0(\underline{P})$.

Every representation
of $P$ or $\underline
P$ over $\mathbb F$ is
isomorphic to a
representation in
which all vector
spaces are $\mathbb
F\oplus\dots\oplus\mathbb
F$. From now on, we
consider only such
representations of $P$
and $\underline P$;
they can be given by a
square matrix $A$:
\begin{equation}\label{ren7a+}
{\cal A}:\quad
\xymatrix{
 *{\ci}
 \ar@(ur,dr)@{-}^{
A}}\qquad\text{(we
write ${\cal A}=A$)}
 \end{equation}
and, respectively, by
rectangular matrices
$A$ and $B$ of the
same size:
\begin{equation*}\label{ser14sd}
{\cal
M}:\quad\xymatrix{
 {\ci}
\ar@/^/@{->}[rr]^{A}
\ar@/_/@{->}[rr]_{B}
&&{\ci}
 }\qquad\text{(we
write ${\cal
M}=(A,B)$),}
    \end{equation*}
we omit the spaces
$\mathbb
F\oplus\dots\oplus\mathbb
F$ since they are
completely determined
by the sizes of the
matrices.

The adjoint
representation
\begin{equation*}\label{ren2+}
{\cal
M}^{\circ}:\quad\xymatrix{
 {\ci}
\ar@/^/@{->}[rr]^{B^*}
\ar@/_/@{->}[rr]_{A^*}
&&{\ci}
 }
 \end{equation*}
is given by the matrix
pair
\begin{equation}\label{ldbye}
(A,B)^{\circ} =
(B^*,A^*).
\end{equation}

A morphism of
representations
\begin{equation*}\label{ser14d}
\xymatrix@R=12pt{{\
{\cal M}:}\ar[dd]_f&&
 *{\ci}\ar[dd]_{F_1}
  \ar@/^/@{->}[rr]^{A}
 \ar@/_/@{->}[rr]_{B}
 &&*{\ci}\ar[dd]^{F_2}
\\ \\
 {\ {\cal M}':}&&*{\ci}
  \ar@/^/@{->}[rr]^{A'}
 \ar@/_/@{->}[rr]_{B'}
 &&*{\ci}
 }
\end{equation*}
is given by the matrix
pair
$f=[F_1,F_2]\colon\;
{\cal M}\to {\cal M}'$
(for morphisms we use
square brackets)
satisfying
\begin{equation}\label{msp}
F_2A=A'F_1,\qquad
F_2B=B'F_1.
\end{equation}

Denote by $0_{m0}$ and
$0_{0n}$ the $m\times
0$ and $0\times n$
matrices representing
the linear mappings
$0\to {\mathbb F}^m$
and ${\mathbb F}^n\to
0$. Thus,
$0_{m0}\oplus 0_{0n}$
is the $m\times n$
zero matrix.

\begin{lemma}\label{lenhi}
Let $\mathbb F$ be a
field or skew field of
characteristic
different from $2$.
Let ${\cal O}_{\mathbb
F}$ be a maximal set
of nonsingular
indecomposable
canonical matrices
over\/ $\mathbb F$ for
similarity. Let $P$ be
the pograph
\eqref{jsos+}. Then:

\begin{itemize}
  \item[{\rm(a)}] The set
$\ind(\underline{P})$
can be taken to be the
set of all
representations
\begin{equation*}\label{ser16}
(\Phi, I_n),\ (J_n(0),
I_n),\, (I_n,J_n(0)),\
(M_n,N_n),\
(N_n^T,M_n^T)
\end{equation*}
in which $\Phi\in
{\cal O}_{\mathbb F}$
is $n$-by-$n$ and
\begin{equation*}\label{ser17}
M_n:=\begin{bmatrix}
1&0&&0\\&\ddots&\ddots&\\0&&1&0
\end{bmatrix},\quad
N_n:=\begin{bmatrix}
0&1&&0\\&\ddots&\ddots&\\0&&0&1
\end{bmatrix}
\end{equation*}
are $(n-1)$-by-$n$ for
each natural number
$n$.

\item[\rm(b)] The set
$\ind_1(\underline{P})$
can be taken to be the
set of all
representations
\begin{equation*}\label{sew}
(\Phi, I_n),\quad
(J_n(0), I_n),\quad
(M_n,N_n)
\end{equation*}
in which $\Phi\in{\cal
O}_{\mathbb F}$  is an
$n\times n$ matrix
such that
$\sqrt[\displaystyle
*]{\Phi}$ does not
exist, and
\begin{equation}\label{4.adg}
\parbox{18em}
{$\Phi$ is determined
up to replacement by\\
the unique
$\Psi\in{\cal
O}_{\mathbb F}$ that
is similar to
$\Phi^{-*}$.}
\end{equation}
The corresponding
representations of
${P}$ are
\begin{equation}\label{dwh}
(\Phi,
I_n)^+=[\Phi\diag
I_n],
\end{equation}
\begin{equation}\label{dld}
(M_n,N_n)^+\simeq
J_{2n-1}(0),\qquad
(J_n(0), I_n)^+\simeq
J_{2n}(0).
\end{equation}

  \item[\rm(c)]
The set
$\ind_0(\underline{{P}})$
can be taken to be the
set of all
representations
\begin{equation}\label{nsp}
\underline{\cal
A}_{\Phi} :=
(\sqrt[\displaystyle
*]{\Phi},
(\sqrt[\displaystyle
*]{\Phi})^*)
\end{equation}
in which $\Phi\in{\cal
O}_{\mathbb F}$ is
such that
$\sqrt[\displaystyle
*]{\Phi}$ exists. The
corresponding
representations of
${P}$ are
\begin{equation}\label{5mi}
{\cal A}_{\Phi}
=\sqrt[\displaystyle
*]{\Phi},\quad {\cal
A}_{\Phi}^-
=-\sqrt[\displaystyle
*]{\Phi}, \quad {\cal
A}_{\Phi}^f=
\sqrt[\displaystyle
*]{\Phi}F,
\end{equation}
in which
$f=[F,F^*]\colon
\underline{\cal
A}_{\Phi}\is
\underline{\cal
A}_{\Phi}$ is a
selfadjoint
automorphism.

\item[\rm(d)] Let
$\mathbb F$ be a field
and let
$\underline{\cal
A}_{\Phi} :=
(\sqrt[\displaystyle
*]{\Phi},
(\sqrt[\displaystyle
*]{\Phi})^*)\in
\ind_0(\underline{{P}})$,
in which $\Phi$ is a
nonsingular matrix
over $\mathbb F$ that
is indecomposable for
similarity $($thus,
its characteristic
polynomial is a power
of some irreducible
polynomial
$p_{\Phi})$.
\begin{itemize}
  \item[\rm(i)]
The ring
$\End(\underline{\cal
A}_{\Phi})$ of
endomorphisms of
$\underline{\cal
A}_{\Phi}$ consists of
the matrix pairs
\begin{equation}\label{ldy}
[f(\Phi),f(\Phi^{-*})],\qquad
f(x)\in\mathbb F[x],
\end{equation}
and the involution on
$\End(\underline{\cal
A}_{\Phi})$ is
\[
[f(\Phi),f(\Phi^{-*})]^{\circ}=
[\bar
f(\Phi^{-1}),\bar
f(\Phi^*)].
\]

  \item[\rm(ii)]
$\mathbb T({\cal
A}_{\Phi})$ can be
identified with the
field
\begin{equation}\label{ksyq}
{\mathbb
F}(\kappa)={\mathbb
F}[x]/p_{\Phi}(x){\mathbb
F}[x],\qquad
\kappa:=x+p_{\Phi}(x){\mathbb
F}[x],
\end{equation}
with involution
\begin{equation}\label{kxu}
f(\kappa)^{\circ}=
\bar f(\kappa^{-1}).
\end{equation}
Each element of
$\mathbb T({\cal
A}_{\Phi})$ on which
this involution acts
identically is
uniquely represented
in the form
$q(\kappa)$ for some
nonzero function
\eqref{ser13}. The
representations
\begin{equation}\label{wmp}
{\cal
A}_{\Phi}^{q(\kappa)}:
\quad \ \xymatrix{
 *{\ci}
 \ar@(ur,dr)@{-}^{
\sqrt[\displaystyle
*]{\Phi}\,q(\Phi)}}
\end{equation}
$($see \eqref{lfw}$)$
constitute the orbit
of ${\cal A}_{\Phi}$.
\end{itemize}
\end{itemize}

\end{lemma}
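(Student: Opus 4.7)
}

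For part (a), I would invoke the classical Kronecker canonical form for matrix pencils. A representation of $\underline{P}$ is a pair of matrices $(A,B)$ of equal size, and the isomorphism $[F_1,F_2]\colon (A,B)\to(A',B')$ defined by \eqref{msp} is precisely strict equivalence of the pencil $xA-B$. Hence the indecomposable representations, up to isomorphism, correspond exactly to the indecomposable Kronecker blocks: the regular blocks with a nonzero finite eigenvalue, $(\Phi,I_n)$ with $\Phi\in{\cal O}_{\mathbb F}$; the regular blocks at $0$ and $\infty$, namely $(J_n(0),I_n)$ and $(I_n,J_n(0))$; and the two families of singular blocks $(M_n,N_n)$ and $(N_n^T,M_n^T)$.

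For part (b), I would compute the involution $(A,B)^\circ=(B^*,A^*)$ on the list from (a). One checks directly that $(J_n(0),I_n)^\circ\simeq(I_n,J_n(0))$ and $(M_n,N_n)^\circ=(N_n^T,M_n^T)$, so each of these pairs contributes one representative to $\ind_1(\underline P)$. For the regular blocks, $(\Phi,I_n)^\circ\simeq (\Psi,I_n)$ exactly when $\Psi$ is similar to $\Phi^{-*}$, giving the equivalence \eqref{4.adg}. The crucial step is to show that $(\Phi,I_n)$ is isomorphic to a selfadjoint representation if and only if $\sqrt[\displaystyle *]{\Phi}$ exists: if $A:=\sqrt[\displaystyle *]{\Phi}$ exists, then $A^{-*}A=\Phi$, and the morphism $[I,A^{-*}]$ gives an isomorphism $(\Phi,I_n)\is (A,A^*)$, so $(\Phi,I_n)\in\ind_0(\underline P)$; conversely, any selfadjoint realization yields a square root in the required sense. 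The remaining $(\Phi,I_n)$'s, for which $\sqrt[\displaystyle *]{\Phi}$ does not exist, lie in $\ind_1(\underline P)$. For the $+$-construction, a direct application of \eqref{piyf} to the nonoriented loop $\alpha$ gives $\mathcal M^+_\alpha=\bigl[\begin{smallmatrix}0&B^*\\A&0\end{smallmatrix}\bigr]$; this is $[\Phi\diag I_n]$ for $(\Phi,I_n)$, proving \eqref{dwh}. For $(J_n(0),I_n)^+$ and $(M_n,N_n)^+$, the resulting $2n\times 2n$ and $(2n-1)\times(2n-1)$ nilpotent matrices have maximal rank $2n-1$ and $2n-2$ respectively, so each is a single Jordan block $J_{2n}(0)$ or $J_{2n-1}(0)$ up to congruence, giving \eqref{dld}.

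Part (c) is then immediate: the selfadjoint realizations of the remaining $(\Phi,I_n)$'s are exactly the $\underline{\mathcal A}_\Phi=(\sqrt[\displaystyle *]{\Phi},(\sqrt[\displaystyle *]{\Phi})^*)$, and their corresponding representations of $P$ are the single-matrix forms $\mathcal A_\Phi=\sqrt[\displaystyle *]{\Phi}$, with the $-$ operation and twist by a selfadjoint automorphism $f=[F,F^*]$ multiplying on the right as in \eqref{5mi} (by the definition of $\mathcal A^f$ just before \eqref{kyg}).

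For part (d), I would compute the endomorphism ring of $\underline{\mathcal A}_\Phi$ explicitly. An endomorphism $[F_1,F_2]$ of $(A,A^*)$ with $A=\sqrt[\displaystyle *]{\Phi}$ must satisfy $F_2A=AF_1$ and $F_2A^*=A^*F_1$. Eliminating $F_2=AF_1A^{-1}$ and using the identities $A^{-1}A^*=\Phi^{-1}$ and $A\Phi A^{-1}=\Phi^{-*}$ (both immediate from $\Phi=A^{-*}A$), the second equation reduces to $[F_1,\Phi]=0$. Since $\Phi$ is indecomposable for similarity, its centralizer is $\mathbb F[\Phi]$, giving $F_1=f(\Phi)$ and $F_2=f(\Phi^{-*})$ as in \eqref{ldy}. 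The involution $[F_1,F_2]^\circ=[F_2^*,F_1^*]$ then sends $[f(\Phi),f(\Phi^{-*})]$ to $[\bar f(\Phi^{-1}),\bar f(\Phi^*)]$. Passing to the quotient $\mathbb T(\mathcal A_\Phi)=\End(\underline{\mathcal A}_\Phi)/R$, where $R$ corresponds to the maximal ideal $(p_\Phi(\Phi))$ of $\mathbb F[\Phi]$, the endomorphism ring descends to the field $\mathbb F(\kappa)$ of \eqref{ksyq} with involution \eqref{kxu}. The fixed elements are exactly the $q(\kappa)$ of Lemma \ref{LEMMA 7}, and lifting $q(\kappa)$ to a selfadjoint $f_a=[q(\Phi),q(\Phi^{-*})]$ (by averaging if necessary) and applying the definition of $\mathcal A^f$ yields the orbit representatives \eqref{wmp}.

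The main obstacle I anticipate is the careful bookkeeping in part (b): verifying that every $(\Phi,I_n)$ isomorphic to a selfadjoint representation admits $\sqrt[\displaystyle *]{\Phi}$, and correctly identifying which Kronecker blocks pair under $(\cdot)^\circ$. The explicit $+$-computation, especially showing that the nilpotent $+$-blocks collapse to single Jordan blocks, is routine once the correct block-matrix formula \eqref{piyf} is applied.
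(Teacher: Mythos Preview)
Your approach is essentially the paper's, and the logical skeleton is right. Two points deserve correction.

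First, a slip: the isomorphism you write as $[I,A^{-*}]\colon(\Phi,I_n)\is(A,A^*)$ does not satisfy \eqref{msp}. With $A=\sqrt[\displaystyle *]{\Phi}$ and $\Phi=A^{-*}A$, the correct isomorphism is $[I_n,A^*]$, since $A^*\Phi=A^*A^{-*}A=A$ and $A^*I_n=A^*I_n$. The paper records exactly this map. Relatedly, your converse ``any selfadjoint realization yields a square root'' is true but needs the small argument the paper gives: compose an arbitrary isomorphism $(\Phi,I_n)\is(B,B^*)$ with the congruence $[F_1^{-1},F_1^*]$ to force the first component to be the identity; then \eqref{msp} reads $A=F\Phi$, $A^*=F$, hence $A=A^*\Phi$.

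Second, your justification of \eqref{dld} via ``maximal rank, hence a single Jordan block up to congruence'' is not self-contained: maximal rank pins down the \emph{similarity} class of a nilpotent matrix, not its congruence class. The paper instead exhibits an explicit permutation $f$ of $\{1,\dots,m\}$ so that the permutation matrix $P$ satisfies $P^T[J_n(0)\diag I_n]P=J_{2n}(0)$ (and similarly for $[M_n\diag N_n^T]$); since $P$ has $\{0,1\}$ entries, $P^*=P^T=P^{-1}$ and this is simultaneously a similarity and a $*$congruence. Your rank argument can be rescued, but only by invoking Theorem~\ref{tetete1}: once you know the only singular indecomposable summands are $(M_k,N_k)^+$ and $(J_k(0),I_k)^+$, each of corank~$1$, a corank-$1$ nilpotent matrix can have only one summand, and matching sizes forces the identification. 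Either route works; the paper's permutation is more direct.
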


\begin{proof}
(a) This form of
Kronecker's theorem
about matrix pencils
follows from
\cite[Sect.
11.1]{gab-roi}.

(b)\,\&\,(c) Let
$\Phi,\Psi\in{\cal
O}_{\mathbb F}$ be
$n$-by-$n$. In view of
\eqref{ldbye},
$(\Phi,I_n)^{\circ}
=(I_n,\Phi^*)\simeq
(\Phi^{-*},I_n)$ and
so
\begin{equation}\label{mdpiug}
(\Psi,I_n)\simeq
(\Phi,I_n)^{\circ}
\quad\Longleftrightarrow\quad
\text{$\Psi$ is
similar to
$\Phi^{-*}$.}
\end{equation}

Suppose $(\Phi,I_n)$
is isomorphic to a
selfadjoint
representation:
\begin{equation}\label{bgd}
[F_1,F_2]\colon\;
(\Phi,I_n) \is
(B,B^*).
\end{equation}
Define a selfadjoint
representation
$(A,A^*)$ by the
congruence
\begin{equation}\label{wfw}
[F_1^{-1},F_1^*]\colon\;
(B,B^*) \is (A,A^*).
\end{equation}
The composition of
\eqref{bgd} and
\eqref{wfw} is the
isomorphism
\[
\xymatrix@R=15pt{
 *{\ci}\ar[dd]_{I_n}
  \ar@/^/@{->}[rr]^{\Phi}
 \ar@/_/@{->}[rr]_{I_n}
 &&*{\ci}\ar[dd]^{F:=F^*_1F_2}
\\ \\
 *{\ci}
  \ar@/^/@{->}[rr]^{A}
 \ar@/_/@{->}[rr]_{A^*}
 &&*{\ci}
 }
\]
By \eqref{msp}, $ A =
F\Phi$ and $A^* = F$.
Thus $A = A^*\Phi$.
Taking
$A=\sqrt[\displaystyle
*]{\Phi}$, we obtain
\begin{equation*}\label{kdi}
[I_n,(\sqrt[\displaystyle
*]{\Phi})^*]\colon
(\Phi,I_n)\is
(\sqrt[\displaystyle
*]{\Phi},(\sqrt[\displaystyle
*]{\Phi})^*).
\end{equation*}

This means that if
$(\Phi,I_n)\in
\ind(\underline{{P}})$
is isomorphic to a
selfadjoint
representation, then
$(\Phi,I_n)$ is
isomorphic to
\eqref{nsp}. Hence,
the representations
\eqref{nsp} comprise
$\ind_0(\underline{{P}})$.
Due to \eqref{mdpiug},
we can identify
isomorphic
representations in the
set of remaining
representations
$(\Phi,I_n)\in
\ind(\underline{{P}})$
by imposing the
condition
\eqref{4.adg}; we then
obtain
$\ind_1(\underline{P})$
from Lemma
\ref{lenhi}(b).

To verify \eqref{dld},
we prove that $J_m(0)$
is permutationally
similar to
\[
  \begin{cases}
  (M_n,N_n)^+=[M_n\diag
N_n^T] & \text{if $m=2n-1$}, \\
    (J_n(0),
I_n)^+=[J_n(0)\diag
I_n] & \text{if
$m=2n$}
  \end{cases}
\]
(see \eqref{piyf}).
The units of $J_m(0)$
are at the positions $
(1,2)$,
$(2,3),\,\dots,\,$$(m-1,m);
$ so it suffices to
prove that there is a
permutation $f$ on
$\{1,2,\dots,m\}$ such
that
\[
(f(1),f(2)),\ \
(f(2),f(3)),\ \dots,\
(f(m-1),f(m))
\]
are the positions of
the unit entries in
$[M_n\diag N_n^T]$ or
$[J_n(0)\diag I_n]$.
This becomes clear if
we arrange the
positions of the unit
entries in the
$(2n-1)\times(2n-1)$
matrix
$$
[M_n\diag
N_n^T]=\left[
\begin{array}{c|c}
\text{\large\rm 0}&
\begin{matrix}
0&&0\\1&\ddots&\\
&\ddots&0\\0&&1
\end{matrix}
\\ \hline
\begin{matrix}
1&0&&0\\&\ddots&\ddots&\\
0&&1&0
\end{matrix}
& \text{\large\rm 0}
\end{array}\right]
$$
as follows:
\begin{multline*}
(n,2n-1),\,
(2n-1,n-1),\,
(n-1,2n-2),\\
(2n-2,n-2),\dots,
(2,n+1),\, (n+1,1),
\end{multline*}
and the positions of
the unit entries in
the $2n\times 2n$
matrix $[J_n(0)\diag
I_n]$ as follows:
\[
(1,n+1),\,(n+1,2),\,
(2,n+2),\, (n+2,3),
\dots,(2n-1,n),\,(n,2n).
\]

(d) Let $\mathbb F$ be
a field.  If $\Phi$ is
a square matrix over
$\mathbb F$ that is
indecomposable for
similarity, then each
matrix over $\mathbb
F$ that commutes with
$\Phi$ is a polynomial
in $\Phi$. To verify
this, we may assume
that $\Phi$ is an
$n\times n$ Frobenius
block \eqref{3.lfo}.
Then the vectors
\begin{equation}\label{gwz}
e:=(1,0,\dots,0)^T,\
\Phi e,\ \dots,\
\Phi^{n-1} e
\end{equation}
 form a
basis of $\mathbb
F^n$. Let $S\in\mathbb
F^{n\times n}$ commute
with $\Phi$, let
\[S e=a_0e+a_1\Phi
e+\dots+
a_{n-1}\Phi^{n-1}e,\qquad
a_0,\dots,
a_{n-1}\in\mathbb F,\]
and let
$f(x):=a_0+a_1x
+\dots+
a_{n-1}x^{n-1}\in\mathbb
F[x].$ Then
$Se=f(\Phi)e$ and
\[S\Phi e=\Phi Se=\Phi f(\Phi)e=
f(\Phi)\Phi e, \
\dots, \ S\Phi^{n-1}
e=f(\Phi)\Phi^{n-1}e.
\]
Since \eqref{gwz} is a
basis, $S=f(\Phi)$.

(i)  Let
$\underline{\cal
A}_{\Phi} := (A,
A^*)\in
\ind_0(\underline{{P}})$,
in which $\Phi$ is a
nonsingular matrix
over $\mathbb F$ that
is indecomposable for
similarity and
$A:=\sqrt[\displaystyle
*]{\Phi}$. Let
$g=[G_1, G_2]\in
\End(\underline{\cal
A}_{\Phi})$. Then
\eqref{msp} ensures
that
\begin{equation}\label{mdtc}
G_2A=A G_1,\qquad
G_2A^*=A^* G_1,
\end{equation}
and so
\begin{equation}\label{dkr}
\Phi G_1=A^{-*} AG_1=
A^{-*}G_2 A= G_1A^{-*}
A= G_1{\Phi}.
\end{equation}
Since $G_1$ commutes
with $\Phi$, we have
$G_1 = f(\Phi)$ for
some $f(x)\in \mathbb
F[x]$, and
\begin{equation}\label{lrsh}
G_2=A G_1 A^{-1} = f(A
\Phi A^{-1})= f(A
A^{-*} A A^{-1})
=f(\Phi^{-*}).
\end{equation}

Consequently, the ring
$\End(\underline{\cal
A}_{\Phi})$ of
endomorphisms of
$\underline{\cal
A}_{\Phi}$ consists of
the matrix pairs
\eqref{ldy}, and the
involution
\eqref{kdtc} has the
form
\[
[f(\Phi),f(\Phi^{-*})]^{\circ}=
[f(\Phi^{-*})^*,f(\Phi)^*]=
[\bar
f(\Phi^{-1}),\bar
f(\Phi^*)].
\]

(ii) The first
equality in
\eqref{lrsh} ensures
that each endomorphism
$[f(\Phi),f(\Phi^{-*})]$
is completely
determined by
$f(\Phi)$. Thus, the
ring
$\End(\underline{\cal
A}_{\Phi})$ can be
identified with \[
\mathbb
F[\Phi]=\{f(\Phi)\,|\,f\in\mathbb
F[x]\}\quad \text{with
involution
$f(\Phi)\mapsto \bar
f(\Phi^{-1})$,}\]
which is isomorphic to
$\mathbb
F[x]/p_{\Phi}(x)^s\mathbb
F[x]$, in which
$p_{\Phi}(x)^s$ is the
characteristic
polynomial
\eqref{ser24} of
$\Phi$. Thus, the
radical of the ring $
\mathbb F[\Phi]$ is
generated by
$p_{\Phi}(\Phi)$ and
$\mathbb T({\cal
A}_{\Phi})$ can be
identified with the
field \eqref{ksyq}
with involution
$f(\kappa)^{\circ}=
\bar f(\kappa^{-1})$.

According to Lemma
\ref{LEMMA 7}, each
element of the field
\eqref{ksyq} on which
the involution acts
identically is
uniquely representable
in the form
$q(\kappa)$ for some
nonzero function
$q(x)$ of the form
\eqref{ser13}. The
pair $[q(\Phi), A
q(\Phi) A^{-1}]$ is an
endomorphism of
$\underline{\cal
A}_{\Phi}$ due to
\eqref{mdtc}. This
endomorphism is
selfadjoint since the
function \eqref{ser13}
satisfies
$q(x^{-1})=\bar q(x)$,
and so \[ A q(\Phi)
A^{-1}
=q(\Phi^{-*})=\bar
q(\Phi^*)=
q(\Phi)^*.\]

Since distinct
functions $q(x)$ give
distinct $q(\kappa)$
and
\[
q(\Phi)\in
q(\kappa)=q(\Phi)
+p_{\Phi}(\Phi){\mathbb
F}[\Phi],
\]
in \eqref{lfw} we may
take
$
f_{q(\kappa)}:=
[q(\Phi),q(\Phi)^*]
\in
\End(\underline{\cal
A}_{\Phi}).
$
By \eqref{5mi}, the
corresponding
representations $
{\cal
A}_{\Phi}^{q(\kappa)}
= {\cal
A}_{\Phi}^{f_{q(\kappa)}}
$ have the form
\eqref{wmp} and
constitute the orbit
of ${\cal A}_{\Phi}$.
\end{proof}

\begin{proof}[Proof of
Theorem \ref{Theorem
5}] (a) Each square
matrix $A$ gives the
representation
\eqref{ren7a+} of the
pograph \eqref{jsos+}.
Theorem \ref{tetete1}
ensures that each
representation of
\eqref{jsos+} over a
field $\mathbb F$ of
characteristic
different from $2$ is
isomorphic to a direct
sum of representations
of the form ${\cal
M}^+$ and ${\cal
A}^a$, where ${\cal
M}\in
\ind_1(\underline{P})$,
$\underline{\cal A}\in
\ind_0(\underline{P})$,
and $0\ne
a=a^{\circ}\in\mathbb
T({\cal A})$. This
direct sum is
determined uniquely up
to permutation of
summands and
replacement of the
whole group of
summands $ {\cal
A}^{a_1}
\oplus\dots\oplus
{\cal A}^{a_s} $ with
the same $\cal A$ by $
{\cal A}^{b_1}
\oplus\dots\oplus
{\cal A}^{b_s} $,
provided that the
Hermitian forms $
a_1x_1^{\circ}x_1+\dots+
a_sx_s^{\circ}x_s$ and
$
b_1x_1^{\circ}x_1+\dots+
b_sx_s^{\circ}x_s$ are
equivalent over
$\mathbb T({\cal A})$,
which is a field by
\eqref{ksyq}.

This proves (a) since
we can use the sets
$\ind_1(\underline{P})$
and
$\ind_0(\underline{P})$
from Lemma
\ref{lenhi}; the field
$\mathbb T({\cal A})$
is isomorphic to
\eqref{ksyq}, and the
representations ${\cal
M}^+$ and ${\cal A}^a$
have the form
\eqref{dwh},
\eqref{dld}, and
\eqref{wmp}.

(b) Let $\mathbb F$ be
a real closed field
and let $\Phi\in{\cal
O}_{\mathbb F}$ be
such that
$\sqrt[\displaystyle
*]{\Phi}$ exists. Let
us identify $\mathbb
T({\cal A}_{\Phi})$
with the field
\eqref{ksyq}. Then
$\mathbb T({\cal
A}_{\Phi})$ is either
$\mathbb F$ or its
algebraic closure. In
the latter case, the
involution \eqref{kxu}
on $\mathbb T({\cal
A}_{\Phi})$ is not the
identity; otherwise
$\kappa= \kappa^{-1}$,
$\kappa^2-1=0$, i.e.,
$p_{\Phi}(x)=x^2-1$,
which contradicts the
irreducibility of
$p_{\Phi}(x)$.

Applying Theorem
\ref{tetete}, we
complete the proof of
(b).
\end{proof}

\section{Proof of Theorem
\ref{t1.1}}
\label{secmat}

\subsection{Proof of
Theorem \ref{t1.1}(a)}
\label{sec(a)}

Let $\mathbb F$ be an
algebraically closed
field of
characteristic
different from $2$
with the identity
involution. Take
${\cal O}_{\mathbb F}$
to be all nonsingular
Jordan blocks.

The summands
(i)--(iii) of Theorem
\ref{t1.1}(a) can be
obtained from the
summands (i)--(iii) of
Theorem \ref{Theorem
5} because for nonzero
$\lambda,\mu\in\mathbb
F$
\begin{align*}
J_n(\lambda)\text{ is
similar to
}J_n(\mu)^{-T}&
\quad\Longleftrightarrow\quad
\lambda={\mu}^{-1},\\
\sqrt[
T]{J_n(\lambda)}\
\text{ exists
}&\quad\Longleftrightarrow\quad
\lambda=(-1)^{n+1}.
\end{align*}
The first of these two
equivalences is
obvious.

Let us prove the
second. By
\eqref{lbdr} and
\eqref{4.adlw}, if
$\sqrt[T]{J_n(\lambda)}$
exists then
$\lambda=(-1)^{n+1}$.
Conversely, let
$\lambda=(-1)^{n+1}$.
It suffices to prove
the following useful
statement:
\begin{equation}\label{cnt}
\text{the cosquares of
$\Gamma_n$ and
$\Gamma_n'$ are
similar to
$J_n((-1)^{n+1})$},
\end{equation}
which implies that
$\sqrt[T]{J_n((-1)^{n+1})}$
exists by \eqref{ndw}
with
$\sqrt[T]{\Phi}=\Gamma_n$
and
$\Psi=J_n((-1)^{n+1})$.

To verify the first
similarity in
\eqref{cnt}, compute
\begin{equation*}\label{1n}
\Gamma_n^{-1}=(-1)^{n+1}
 \begin{bmatrix}
\vdots&\vdots&\vdots&\vdots&\ddd
\\
-1&-1&-1&-1&\\ 1&1&1&&\\ -1&-1&&&\\
1&&&& 0
\end{bmatrix}
\end{equation*}
and
\begin{equation}\label{1x11}
\Gamma_n^{-T}\Gamma_n=
(-1)^{n+1}
\begin{bmatrix} 1&2&&\text{\raisebox{-6pt}
{\large\rm *}}
\\&1&\ddots&\\
&&\ddots&2\\
0 &&&1
\end{bmatrix}.
\end{equation}

To verify the second
similarity in
\eqref{cnt}, there are
two cases to consider:
If $n$ is even then
\[
(\Gamma'_{n})^{-1}=
\left[\begin{array}{c|c}
\begin{matrix}
 \vdots&\vdots&&\vdots
    \\
1&1&\cdots&1\\
 -1&-1&\cdots&-1
    \\
 1&1&\cdots&1
\end{matrix}
&
\begin{matrix}
\vdots&\ddd&-1&1
    \\
1&\ddd&\ddd
    \\
 -1&1&&
    \\
 1&&&
\end{matrix}
    \\ \hline
\begin{matrix}
 -1&-1&\cdots&-1
 \\
\vdots&\vdots&\ddd&
 \\
-1&-1&&
 \\
-1&&&
\end{matrix}
 &\text{\large\rm 0}
\end{array}\right]
\]
and
\begin{equation*}\label{kiy}
(\Gamma'_n)^{-T}
\Gamma'_n=
\begin{bmatrix}
-1&\pm 2&
&\text{\raisebox{-6pt}
{\large\rm *}}
 \\
&-1&\ddots&
  \\
&&\ddots&\pm 2
   \\
0&&& -1
\end{bmatrix}.
\end{equation*}
If $n$ is odd then
\begin{equation}\label{1n'}
(\Gamma'_n)^{-1}=
\begin{bmatrix}
&&&\pm 1&\dots&-1&1\\
&0&&\vdots&\ddd& \ddd\\
&&&-1&1\\
&&&1\\
  &&1\\
  &\ddd\\
  1&&&&&&0
\end{bmatrix}
\end{equation}
and
\begin{equation}\label{1n''}
(\Gamma'_n)^{-T}
\Gamma'_n=
\begin{bmatrix}
1&\pm 1&
&\text{\raisebox{-6pt}
{\large\rm *}}
 \\
&1&\ddots&
  \\
&&\ddots&\pm 1
   \\
0&&& 1
\end{bmatrix}.
\end{equation}

We have proved that
all direct sums of
matrices of the form
(i)--(iii) are
canonical matrices for
congruence. Let us
prove the last
statement of Theorem
\ref{t1.1}(a). If two
nonsingular matrices
over $\mathbb F$ are
congruent then their
cosquares are similar.
The converse statement
is correct too because
the cosquares of
distinct canonical
matrices for
congruence have
distinct Jordan
canonical forms. Due
to \eqref{cnt},
$\Gamma_{n}$ and
$\Gamma'_{n}$ are
congruent to
$\sqrt[T]{J_n((-1)^{n+1})}$.

\subsection{Proof of
Theorem \ref{t1.1}(b)}
\label{sec(b)}

Let $\mathbb F$ be an
algebraically closed
field of
characteristic $2$.

According to
\cite{ser0}, each
square matrix over\/
$\mathbb F$ is
congruent to a matrix
of the form
\begin{equation}\label{ogy}
\bigoplus_i
[J_{m_i}(\lambda_i)\diag
I_{m_i}]\oplus
\bigoplus_j
\sqrt[T]{J_{n_j}(1)}
\oplus \bigoplus_k
J_{r_k}(0),
\end{equation}
in which $\lambda_i\ne
0$, $n_j$ is odd, and
$J_{m_i}(\lambda_i)\ne
J_{n_j}(1)$ for all
$i$ and $j$. This
direct sum is
determined uniquely up
to permutation of
summands and
replacement of any
$J_{m_i}(\lambda_i)$
by
$J_{m_i}(\lambda_i^{-1})$.

The matrix $\sqrt[
T]{J_{n}(1)}$ was
constructed in
\cite[Lemma 1]{ser0}
for any odd $n$, but
it is cumbersome. Let
us prove that
$\Gamma'_{n}$ is
congruent to $\sqrt[
T]{J_{n}(1)}$. Due to
\eqref{1n'} and
\eqref{1n''} (with
$-1=1$), the cosquare
of $\Gamma'_{n}$ is
similar to $J_{n}(1)$.
Let $\Sigma$ be the
canonical matrix of
the form \eqref{ogy}
for $\Gamma'_{n}$.
Then the cosquares of
$\Sigma$ and
$\Gamma'_{n}$ are
similar, and so
$\Sigma=\sqrt[
T]{J_{n}(1)}$.

\subsection{Proof of
Theorem \ref{t1.1}(c)}
\label{sec(c)}

Let $\mathbb F=\mathbb
P+\mathbb Pi$ be an
algebraically closed
field with nonidentity
involution represented
in the form
\eqref{1pp11}. Take
${\cal O}_{\mathbb F}$
to be all nonsingular
Jordan blocks.

The summands
(i)--(iii) of Theorem
\ref{t1.1}(c) can be
obtained from the
summands (i)--(iii) of
Theorem \ref{Theorem
5} because for nonzero
$\lambda,\mu\in\mathbb
F$
\begin{align}\nonumber
J_n(\lambda)\text{ is
similar to
}J_n(\mu)^{-*}
&\quad\Longleftrightarrow\quad
\lambda=\bar{\mu}^{-1},\\
\label{nlsi}
\sqrt[\displaystyle
*]{J_n(\lambda)}\
\text{ exists
}&\quad\Longleftrightarrow\quad
|\lambda|=1\ \
(\text{see
\eqref{1kk}}).
\end{align}
Let us prove
\eqref{nlsi}. By
\eqref{lbdr}, if
$\sqrt[\displaystyle
*]{J_n(\lambda)}$
exists for $\lambda
=a+bi\ (a,b\in \mathbb
P)$ then $x-\lambda
=x-\bar\lambda^{-1}$.
Thus, $\lambda
=\bar\lambda^{-1}$ and
$1=\lambda
\bar\lambda=a^2+b^2=|\lambda
|^2$. Conversely, let
$|\lambda|=1$. It
suffices to show that
the *cosquare of
$i^{n+1}\sqrt{\lambda}
\Gamma_n$ is similar
to $J_n(\lambda)$
since then
$\sqrt[\displaystyle
*]{J_n(\lambda)}$
exists by \eqref{ndw}
with
$\Psi=J_n(\lambda)$.
To verify this
similarity, observe
that for each
unimodular
$\lambda\in\mathbb F$,
\begin{equation}\label{kum}
(i^{n+1}\sqrt{\lambda}
\Gamma_n)^{-*}
(i^{n+1}\sqrt{\lambda}
\Gamma_n)=
\lambda\,(-1)^{n+1}
\Gamma_n^{-T}\Gamma_n;
\end{equation}
by \eqref{1x11},
$\lambda\,(-1)^{n+1}
\Gamma_n^{-T}\Gamma_n$
is similar to $\lambda
J_n (1)$, which is
similar to $J_n
(\lambda).$

It remains to prove
that each of the
matrices \eqref{jde}
can be used instead of
(iii) in Theorem
\ref{t1.1}(c). Let us
show that if
$\lambda\in\mathbb F$
is unimodular, then
$J_n(\lambda)$ is
similar to the
*cosquare of each of
the matrices
\begin{equation}\label{ksim}
\sqrt{\lambda}\sqrt[\displaystyle
*]{ J_n (1)},\qquad
i^{n+1}\sqrt{\lambda}
\Gamma_n,\qquad
i^{n+1}\sqrt{\lambda}
\Gamma'_n,\qquad
\sqrt{\lambda}\,
\Delta_n(1).
\end{equation}
The first similarity
is obvious. The second
was proved in
\eqref{kum}. The third
can be proved
analogously since
$(\Gamma_n')^{-T}\Gamma'_n$
is similar to
$\Gamma_n^{-T}\Gamma_n$
by \eqref{cnt}. The
fourth similarity
holds since $J_n(1)$
is similar to the
*cosquare of
$\Delta_n(1)$ as a
consequence of the
following useful
property: for each
$\mu\in \mathbb F$
with $\bar\mu^{-1}
\mu\ne -1$,
\begin{equation}\label{kdt}
\text{
$J_n(\bar{\mu}^{-1}
\mu)$ is similar to
the *cosquare of
$\Delta_n(\mu)$.}
\end{equation}
To verify this
assertion, compute
\begin{multline*}
\Delta_n(\mu)^{-*}
\Delta_n(\mu)\\=
 \begin{bmatrix}
\text{\raisebox{-6pt}
{\large\rm
*}}&&i\bar{\mu}^{-2}&\
\bar{\mu}^{-1}\\
&\ddd&\ddd&\\
i\bar{\mu}^{-2}&\
\bar{\mu}^{-1}&&\\
\bar{\mu}^{-1}&&&0
\end{bmatrix}
\Delta_n(\mu)=
\begin{bmatrix}
\mu\bar{\mu}^{-1}
&i\bar{\mu}^{-1}u&&
\text{\raisebox{-6pt}
{\large\rm *}}
\\&\mu\bar{\mu}^{-1}
&\ddots&\\
&&\ddots&
i\bar{\mu}^{-1}u\\
0 &&&\mu\bar{\mu}^{-1}
\end{bmatrix}
\end{multline*}
with $
u:=\bar{\mu}^{-1} \mu+
1\ne 0.$

Therefore, the
*cosquare of each of
the matrices
\eqref{ksim} can
replace $J_n(\lambda)$
in ${\cal O}_{\mathbb
F}$, and so each of
the matrices
\eqref{ksim} may be
used as
$\sqrt[\displaystyle
*]{\Phi}$ in (iii) of
Theorem \ref{Theorem
5}(a). Thus, instead
of
$\pm\sqrt[\displaystyle
*]{J_n(\lambda)}$ in
(iii) of Theorem
\ref{t1.1}(c) we may
use any of the
matrices \eqref{ksim}
multiplied by $\pm 1$;
and hence any of the
matrices \eqref{jde}
except for $\mu A$
since each
$\sqrt{\lambda}$ can
be represented in the
form $a+bi$ with
$a,b\in\mathbb P$,
$b\ge 0$, and $a+bi\ne
-1$. Let $A$ be any
nonsingular $n\times
n$ matrix whose
*cosquare is similar
to a Jordan block.
Then $A$ is *congruent
to some matrix of type
(iii), and hence $A$
is *congruent to
$\mu_0\Gamma_n$ for
some unimodular
$\mu_0$. Thus, $\mu A$
is *congruent to
$\mu\mu_0\Gamma_n$,
and so we may use $\mu
A$ instead of
$\pm\sqrt[\displaystyle
*]{J_n(\lambda)}$ in
(iii).

\subsection{Proof of
Theorem \ref{t1.1}(d)}
\label{sec(d)}

Let $\mathbb P$ be a
real closed field. Let
$\mathbb K:=\mathbb
P+\mathbb Pi$  be the
algebraic closure of
$\mathbb P$
represented in the
form \eqref{1pp} with
involution
$a+bi\mapsto a-bi$. By
\cite[Theorem
3.4.5]{hor}, we may
take ${\cal
O}_{\mathbb P}$ to be
all $J_n(a)$ with $a
\in\mathbb P$, and all
$J_n(\lambda
)^{\mathbb P}$ with
$\lambda\in{\mathbb
K}\smallsetminus\mathbb
P$ determined up to
replacement by
$\bar\lambda$.

Let $a \in\mathbb P$.
Reasoning as in the
proof of Theorem
\ref{t1.1}(a), we
conclude that
\begin{itemize}
  \item
$J_n(a)$ is similar to
$J_n(b)^{-T}$ with $b
\in\mathbb P$ if and
only if $a=b^{-1}$;

  \item
$\sqrt[ T]{J_n(a)}$
exists if and only if
$a=(-1)^{n+1}$.
\end{itemize}
Thus, the summands
(i)--(iii) of Theorem
\ref{Theorem 5} give
the summands
(i)--(iii) in Theorem
\ref{t1.1}(d). Due to
\eqref{cnt}, we may
take $(\Gamma_n)^{-T}
\Gamma_n$ or
$(\Gamma'_n)^{-T}
\Gamma'_n$ instead of
$J_n((-1)^{n+1})$ in
${\cal O}_{\mathbb
P}$. Thus, we may use
$\pm \Gamma_n$ or $\pm
\Gamma_n'$ instead of
$\pm \sqrt[T]{
J_n((-1)^{n+1})}$ in
Theorem \ref{t1.1}(d).

Let
$\lambda,\mu\in(\mathbb
P+\mathbb
Pi)\smallsetminus\mathbb
P$. Then
\begin{align}\nonumber
J_n(\lambda)^{\mathbb
P}\text{ is similar to
}(J_n(\mu)^{\mathbb
P})^{-T}
&\quad\Longleftrightarrow\quad
\lambda\in\{{\mu}^{-1},
\bar{\mu}^{-1}\},\\
\label{ndsis}
\sqrt[T]{J_n(\lambda)^{\mathbb
P}}\ \text{ exists
}&\quad\Longleftrightarrow\quad
|\lambda |=1.
\end{align}
Let us prove
\eqref{ndsis}. For
$\Phi:=J_n(\lambda
)^{\mathbb P}$, we
have
\begin{equation*}\label{kgv}
p_{\Phi}(x)=(x-\lambda)(x-\bar\lambda)
=x^2-(\lambda+\bar\lambda)
+|\lambda|^2.
\end{equation*}
If $\sqrt[ T]{\Phi}$
exists then
$|\lambda|=1$ by
\eqref{lbdr} and
\eqref{lyf1}.

Conversely, let
$|\lambda|=1$. We can
take
\begin{equation}\label{mdu}
\sqrt[T]{J_n(\lambda)^{
\mathbb
P}}=\Big(\sqrt[\displaystyle
*]{J_n(\lambda)}\Big)^{
\mathbb P}.
\end{equation}
Indeed,
$M:=\sqrt[\displaystyle
*]{J_n(\lambda )}$
exists by
\eqref{nlsi}; it
suffices to prove
\begin{equation}\label{hfe}
(M^{\mathbb P})^{-T}
M^{\mathbb
P}=J_n(\lambda
)^{\mathbb P}.
\end{equation}
If $M$ is represented
in the form $M=A+Bi$
with $A$ and $B$ over
$\mathbb P$, then its
realification
$M^{\mathbb P}$ (see
\eqref{1j}) is
permutationally
similar to
\[
M_{\mathbb
P}:=\begin{bmatrix}
  A&-B\\B&A
\end{bmatrix}.
\]
Applying the same
transformation of
permutation similarity
to the matrices of
\eqref{hfe} gives
\begin{equation}\label{jpoi}
(M_{\mathbb P})^{-T}
M_{\mathbb
P}=J_n(\lambda
)_{\mathbb P}.
\end{equation}
Since
\[
\begin{bmatrix}
  A+Bi&0\\0&A-Bi
\end{bmatrix}
\begin{bmatrix}
  I&iI\\I&-iI
\end{bmatrix}=
\begin{bmatrix}
  I&iI\\I&-iI
\end{bmatrix}
\begin{bmatrix}
  A&-B\\B&A
\end{bmatrix},
\]
we have
\begin{equation*}\label{luf}
M_{\mathbb
P}=S^{-1}(M\oplus \bar
M)S=S^{*}(M\oplus \bar
M)S
\end{equation*}
with
\[
S:=\frac{1}{\sqrt{2}}
\begin{bmatrix}
  I&iI\\I&-iI
\end{bmatrix}=S^{-*}.
\]
Thus, \eqref{jpoi} is
represented in the
form
\[
\left(S^*(M\oplus \bar
M)S\right)^{-*}
S^*(M\oplus \bar M)S=
S^{-1}\left(J_n(\lambda
)\oplus
J_n(\bar\lambda
)\right)S.
\]
This equality is
equivalent to the pair
of equalities
\[
M^{-*} M=J_n(\lambda
),\qquad \bar M^{-*}
\bar M=J_n(\bar
\lambda ),
\]
which are  valid since
$M=\sqrt[\displaystyle
*]{J_n(\lambda )}$.
This proves
\eqref{mdu}, which
completes the proof of
\eqref{ndsis}.

Thus, the summands
(ii) and (iii) of
Theorem \ref{Theorem
5} give the summands
(ii$'$) and (iii$'$)
in Theorem
\ref{t1.1}(d).

It remains to prove
that each of the
matrices \eqref{dsk}
can be used instead of
(iii$'$). Every
unimodular
$\lambda=a+bi\in
{\mathbb P}+{\mathbb
P}i$ with $b>0$ can be
expressed in the form
\begin{equation}\label{roo4}
\lambda=\frac{e+i}{e-i}\,,
\qquad e\in{\mathbb
P},\quad e>0.
\end{equation}
Due to \eqref{cnt},
the *cosquares
\[
((e+i)\Gamma_n)^{-*}
(e+i)\Gamma_n=\lambda
\Gamma_n^{-*}\Gamma_n,\quad
((e+i)\Gamma'_n)^{-*}
(e+i)\Gamma'_n=\lambda
(\Gamma'_n)^{-*}\Gamma'_n
\]
are similar to
$\lambda
J_n((-1)^{n+1})$,
which is similar to
$(-1)^{n+1}
J_n(\lambda)$. Theorem
\ref{Theorem 5}
ensures that the
matrix $\pm\sqrt[T]{
J_n(\lambda)^{\,\mathbb
P}}$ in (iii$'$) can
be replaced
\begin{equation}\label{jyf}
\text{by
$\pm((e+i)\Gamma_n)^{\mathbb
P}$ and also by
$\pm((e+i)\Gamma'_n)^{\mathbb
P}$ with $e>0$.}
\end{equation}
For each square matrix
$A$ over $\mathbb
P+\mathbb Pi$ we have
\begin{equation}\label{roo343}
S^TA^{\mathbb
P}S=\overline
A^{\:\mathbb P},\qquad
S:=\diagg
(1,-1,1,-1,\dots),
\end{equation}
and so
$-\big((e+i)\Gamma_n\big)^{\mathbb
P}$ is congruent to
\[
 -\overline{
(e+i)\Gamma_n}^{\,\mathbb
P}=
-\big((e-i)\Gamma_n\big)^{\mathbb
P}=
\big((-e+i)\Gamma_n\big)^{\mathbb
P}.
\]
Therefore, the
matrices \eqref{jyf}
are congruent to
$((c+i)\Gamma_n)^{\mathbb
P}$ and
$((c+i)\Gamma'_n)^{\mathbb
P}$ with $0\ne
c\in{\mathbb P}$ and
$|c|=e$.

Let us show that the
summands {\rm(iii$'$)}
can be also replaced
by $\Delta_n(c+i)$
with $0\ne
c\in{\mathbb P}$. By
\eqref{kdt}, the
*cosquare of
$\Delta_n(e+i)$ with
$e>0$ is similar to
$J_n(\lambda)$, in
which $\lambda$ is
defined by
\eqref{roo4}.
Reasoning as in the
proof of \eqref{hfe},
we find that the
cosquare of
$\Delta_n(e+i)^{\mathbb
P}$ is similar to
$J_n(\lambda)^{\,\mathbb
P}$. Hence,
$\pm\Delta_n(e+i)^{\mathbb
P}$ with $e>0$ can be
used instead of
(iii$'$). Due to
\eqref{roo343}, the
matrix
$-\Delta_n(e+i)^{\mathbb
P}$ is congruent to
\[
 -\overline{
\Delta_n(e+i)}^{\mathbb
P}=
\Delta_n(-e+i)^{\mathbb
P}.
\]

\subsection{Proof of
Theorem \ref{t1.1}(e)}
\label{sec(e)}

\begin{lemma}
 \label{le}
Let $\mathbb H$ be the
skew field of
quaternions over a
real closed field
$\mathbb P$. Let
${\cal O}_{\mathbb H}$
be a maximal set of
nonsingular
indecomposable
canonical matrices
over $\mathbb H$ for
similarity.

\begin{itemize}
  \item[\rm(a)]
Each square matrix
over $\mathbb H$ is
*congruent to a direct
sum, determined
uniquely up to
permutation of
summands, of matrices
of the form:
\begin{itemize}
  \item[\rm(i)] $J_n(0)$.

  \item[\rm(ii)]
$(\Phi,
I_n)^+=[\Phi\diag
I_n]$, in which
$\Phi\in{\cal
O}_{\mathbb H}$ is an
$n\times n$ matrix
such that
$\sqrt[\displaystyle
*]{\Phi}$ does not
exist; $\Phi$ is
determined up to
replacement by the
unique $\Psi\in{\cal
O}_{\mathbb F}$ that
is similar to
$\Phi^{-*}$.

  \item[\rm(iii)]
$\varepsilon_{\Phi}
\sqrt[\displaystyle
*]{\Phi}$, in which
$\Phi\in{\cal
O}_{\mathbb H}$ is
such that
$\sqrt[\displaystyle
*]{\Phi}$ exists;
$\varepsilon_{\Phi}
=1$ if
$\sqrt[\displaystyle
*]{\Phi}$ is
*congruent to
$-\sqrt[\displaystyle
*]{\Phi}$ and
$\varepsilon_{\Phi}
=\pm 1$ otherwise.
This means that
$\varepsilon_{\Phi}
=1$ if and only if
$\mathbb T({\cal
A}_{\Phi})$ is an
algebraically closed
field with the
identity involution or
$\mathbb T({\cal
A}_{\Phi})$ is a skew
field of quaternions
with involution
different from
quaternionic
conjugation
\eqref{ne}.
\end{itemize}
\end{itemize}

\item[\rm(b)] If
$\varepsilon_{\Phi} =
1$ and $\Phi$ is
similar to $\Psi$,
then
$\varepsilon_{\Psi}
=1$.
\end{lemma}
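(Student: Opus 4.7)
The plan is to show that the property ``$\sqrt[\displaystyle *]{\Phi}$ is *congruent to $-\sqrt[\displaystyle *]{\Phi}$'' is preserved under similarity of $\Phi$, which by the defining condition of $\varepsilon_{\Phi}$ in part (iii) immediately yields the lemma.

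Suppose $\Psi = S^{-1}\Phi S$ for some nonsingular $S$. Since $\sqrt[\displaystyle *]{\Phi}$ exists by hypothesis (it enters the definition of $\varepsilon_{\Phi}$), the observation \eqref{ndw} lets us choose
\[
\sqrt[\displaystyle *]{\Psi} := S^{*}\sqrt[\displaystyle *]{\Phi}\,S,
\]
so in particular $\sqrt[\displaystyle *]{\Psi}$ exists and $\varepsilon_{\Psi}$ is well defined. The assumption $\varepsilon_{\Phi} = 1$ gives an invertible $T$ with $T^{*}\sqrt[\displaystyle *]{\Phi}\,T = -\sqrt[\displaystyle *]{\Phi}$. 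Setting $U := S^{-1}TS$, a direct computation
\[
U^{*}\sqrt[\displaystyle *]{\Psi}\,U
= S^{*}T^{*}S^{-*}\bigl(S^{*}\sqrt[\displaystyle *]{\Phi}\,S\bigr)S^{-1}TS
= S^{*}\bigl(T^{*}\sqrt[\displaystyle *]{\Phi}\,T\bigr)S
= -S^{*}\sqrt[\displaystyle *]{\Phi}\,S
= -\sqrt[\displaystyle *]{\Psi}
\]
shows $\sqrt[\displaystyle *]{\Psi}$ is *congruent to $-\sqrt[\displaystyle *]{\Psi}$, so $\varepsilon_{\Psi} = 1$.

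As an alternative (conceptual) justification that also matches the ``This means that\dots'' reformulation in part (iii), similar matrices $\Phi$ and $\Psi$ yield isomorphic selfadjoint representations $\underline{\cal A}_{\Phi}$ and $\underline{\cal A}_{\Psi}$ (Lemma \ref{lenhi}), hence isomorphic endomorphism rings with involution, and therefore $\mathbb T({\cal A}_{\Phi}) \cong \mathbb T({\cal A}_{\Psi})$ as fields or skew fields with involution \eqref{kyg}. The stated characterization of $\varepsilon_{\Phi} = 1$ depends only on $\mathbb T({\cal A}_{\Phi})$ with its involution, so it transfers to $\Psi$.

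I do not foresee a serious obstacle: the whole content is that the choice of $\sqrt[\displaystyle *]{\Phi}$ up to *congruence is a similarity invariant of $\Phi$, which is exactly what \eqref{ndw} encodes. The only thing to be careful about is to verify that $\sqrt[\displaystyle *]{\Psi}$ exists (so that $\varepsilon_{\Psi}$ is even defined), which is immediate from \eqref{ndw}.
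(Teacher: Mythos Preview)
Your argument for part (b) is correct but differs from the paper's. The paper proves (b) by contradiction, using the uniqueness in part (a): if $\varepsilon_{\Psi}=\pm 1$ then $\sqrt[\displaystyle *]{\Psi}$ and $-\sqrt[\displaystyle *]{\Psi}$ are not *congruent, yet both are indecomposable with *cosquare similar to $\Phi$, so by (a) both have canonical form $\varepsilon_{\Phi}\sqrt[\displaystyle *]{\Phi}=\sqrt[\displaystyle *]{\Phi}$ and hence are *congruent---a contradiction. Your direct computation transports the *congruence $T^{*}\sqrt[\displaystyle *]{\Phi}\,T=-\sqrt[\displaystyle *]{\Phi}$ along the similarity $S$ via $U=S^{-1}TS$, which is more elementary since it does not invoke the classification (a) at all; your alternative route through $\mathbb T({\cal A}_{\Phi})\cong\mathbb T({\cal A}_{\Psi})$ is equally valid and aligns with how the paper links $\varepsilon_{\Phi}$ to Theorem~\ref{tetete}(iii). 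The paper's approach buys brevity once (a) is established; yours makes explicit that $\varepsilon$ is a similarity invariant independently of the classification.

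Note that your proposal addresses only (b); the paper obtains (a) directly from Theorem~\ref{tetete} and Lemma~\ref{lenhi}.
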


\begin{proof}
(a) Theorem
\ref{tetete} ensures
that any given
representation of any
pograph $P$ over
$\mathbb H$ decomposes
uniquely, up to
isomorphism of
summands, into a
direct sum of
indecomposable
representations. Hence
the problem of
classifying
representations of $P$
reduces to the problem
of classifying
indecomposable
representations. By
Theorem \ref{tetete}
and Lemma \ref{lenhi},
the matrices
(i)--(iii) form a
maximal set of
nonisomorphic
indecomposable
representations of the
pograph \eqref{jsos+}.

(b) On the contrary,
assume that
$\varepsilon_{\Psi}
=\pm 1$. Then
$\sqrt[\displaystyle
*]{\Psi}$ and
$-\sqrt[\displaystyle
*]{\Psi}$ have the
same canonical form
$\sqrt[\displaystyle
*]{\Phi}$, a
contradiction.
\end{proof}

Let $\mathbb P$ be a
real closed field and
let $\mathbb H$ be the
skew field of $\mathbb
P$-quaternions with
quaternionic
conjugation \eqref{ne}
or quaternionic
semiconjugation
\eqref{nen}. These
involutions act as
complex conjugation on
the algebraically
closed subfield
$\mathbb K:=\mathbb
P+\mathbb Pi$. By
\cite[Section 3, \S
12]{jac}, we can take
${\cal O}_{\mathbb F}$
to be all
$J_n(\lambda)$, in
which
$\lambda\in\mathbb K$
and $\lambda$ is
determined up to
replacement by $\bar
\lambda$. For any
nonzero $\mu\in\mathbb
K$, the matrix
$J_n(\mu)^{-*}$ is
similar to
$J_n(\bar\mu^{-1})$.
Since $\bar\mu^{-1}$
is determined up to
replacement by
$\mu^{-1}$,
\begin{equation*}\label{lust}
J_n(\lambda)\text{ is
similar to
}J_n(\mu)^{-*}
\quad\Longleftrightarrow\quad
\lambda\in\{{\mu}^{-1},
\bar{\mu}^{-1}\}.
\end{equation*}

Let us prove that for
a nonzero $\lambda
\in\mathbb K$
\begin{equation*}\label{nsis}
\sqrt[\displaystyle
*]{J_n(\lambda)}\
\text{ exists
}\quad\Longleftrightarrow\quad
|\lambda |=1.
\end{equation*}
If
$\sqrt[\displaystyle
*]{J_n(\lambda)}$
exists then by
\eqref{lbdr}
$x-\lambda
=x-\bar\lambda^{-1}$
and so $|\lambda|=1$.
Conversely, let
$|\lambda|=1$. In view
of \eqref{1x11}, the
*cosquare of
$A:=\sqrt{\lambda
(-1)^{n+1}}\Gamma_n$
is
\begin{equation}\label{ntd}
\Phi:=A^{-*} A=
\lambda F,\qquad
F:=(-1)^{n+1}
\Gamma_n^{-T}\Gamma_n
=
\begin{bmatrix}
1&2&&\text{\raisebox{-6pt}
{\large\rm *}}
\\&1&\ddots&\\
&&\ddots&2\\
0 &&&1
\end{bmatrix},
\end{equation}
and so $\Phi$ is
similar to
$J_n(\lambda)$. Thus,
$\sqrt[\displaystyle
*]{J_n(\lambda)}$
exists by \eqref{ndw}
with
$\sqrt[\displaystyle
*]{\Phi}=A$.

Lemma \ref{le}(a)
ensures the summands
(i)--(iii) in Theorem
\ref{t1.1}(e); the
coefficient
$\varepsilon$ in (iii)
is defined in Lemma
\ref{le}(a). Let us
prove that
$\varepsilon$ can be
calculated by
\eqref{kki}. By Lemma
\ref{le}(b) and since
$\Phi$ in \eqref{ntd}
is similar to
$J_n(\lambda)$, we
have $\varepsilon
=\varepsilon_{\Phi}$,
so it suffices to
prove \eqref{kki} for
$\varepsilon_{\Phi}$.

Two matrices
$G_1,G_2\in\mathbb
H^{n\times n}$ give an
endomorphism
$[G_1,G_2]$ of
$\underline{\cal
A}_{\Phi}=(A,A^*)$ if
and only if they
satisfy \eqref{mdtc}.
By \eqref{dkr}, the
equalities
\eqref{mdtc} imply
\begin{equation}\label{kdn}
G_1{\Phi}={\Phi} G_1.
\end{equation}

\emph{Case $\lambda\ne
\pm 1$}. Represent
$G_1$ in the form
$U+Vj$ with
$U,V\in\mathbb
K^{n\times n}$. Then
\eqref{kdn} implies
two equalities
\begin{equation}\label{kuf}
U\Phi =\Phi U,\qquad
V\bar \Phi j =\Phi Vj.
\end{equation}
By the second equality
and \eqref{ntd},
$\bar\lambda
VF=\lambda FV$,
\[
(\bar\lambda-\lambda)V=
\lambda
(F-I)V-\bar\lambda
V(F-I).
\]
Thus $V=0$ since
$\lambda\ne\bar\lambda$
and $F-I$ is nilpotent
upper triangular. By
the first equality in
\eqref{kuf} (which is
over the field
$\mathbb K$),
$G_1=U=f(\lambda
F)=f(\Phi)$ for some
$f\in\mathbb K[x]$;
see the beginning of
the proof of Lemma
\ref{lenhi}(d). Since
$A$ is over $\mathbb
K$, the identities
\eqref{mdtc} imply
\eqref{lrsh}.

Because $G_2=A G_1
A^{-1}$, the
homomorphism
$[G_1,G_2]\in \End
(\underline{\cal
A}_{\Psi})$ is
completely determined
by $G_1=f(\Phi)$. The
matrix $\Phi=\lambda
F$ is upper
triangular, so the
mapping
$f(\Phi)\mapsto
f(\lambda)$ on
$\mathbb K[\Phi]$
defines an
endomorphism of rings
$\End (\underline{\cal
A}_{\Phi})\to \mathbb
K$; its kernel is the
radical of $\End
(\underline{\cal
A}_{\Phi})$. Hence
$\mathbb T( {\cal
A}_{\Phi})$ can be
identified with
$\mathbb K$. Using
\eqref{ldbye}, we see
that the involution on
$\mathbb T({\cal
A}_{\Phi})$ is induced
by the mapping
$G_1\mapsto G_2^*$ of
the form
\[f(\lambda
F)\mapsto f((\lambda
F)^{-*})^*= \bar
f((\lambda F)^{-1}).\]
Therefore, the
involution is
\[f(\lambda)\ \longmapsto\ \bar
f({\lambda}^{-1})=
\bar f(\bar{\lambda})=
\overline{f({\lambda})}\]
and coincides with the
involution
$a+bi\mapsto a-bi$ on
$\mathbb K$. The
statement (iii) in
Lemma \ref{le}(a) now
implies
$\varepsilon_{\Phi}=
\pm 1$; this proves
\eqref{kki} in the
case $\lambda\ne \pm
1$.
\medskip

\emph{Case $\lambda=
\pm 1$.} Then
\begin{equation}\label{mse1}
A=\sqrt{\lambda
(-1)^{n+1}}\Gamma_n=
  \begin{cases}
    \Gamma_n & \text{if $
    \lambda=(-1)^{n+1}$}, \\
    i\Gamma_n & \text{if $
    \lambda=(-1)^{n}$}.
  \end{cases}
\end{equation}

Define
\begin{align*}
\check
h&:=a+bi-cj-dk\quad
\text{for each}\ \
h=a+bi+cj+dk\in\mathbb
H,
\\
\check f(x)&:=\sum_l
\check h_lx^l\quad
\text{for each}\ \
f(x)=\sum_l
h_{l}x^l\in \mathbb
H[x].
\end{align*}
Because $\lambda= \pm
1$ and by \eqref{kdn},
$G_1$ has the form
\[
G_1=\begin{bmatrix}
 a_1&a_2&\ddots&a_{n}
 \\&a_1&\ddots&\ddots
 \\&&\ddots&a_2\\
 0&&&a_1
\end{bmatrix}, \qquad
a_1,\dots,a_n\in\mathbb
H.
\]
Thus, $G_1=f(\Phi)$
for some polynomial
$f(x)\in\mathbb H[x]$.

Using the first
equality in
\eqref{mdtc}, the
identity $if(x)=\check
f(ix)$,  and
\eqref{mse1}, we
obtain
\begin{align*}
G_2=A G_1 A^{-1} =A
f(\Phi) A^{-1}
=\begin{cases}
   f( A
\Phi A^{-1})= f(
\Phi^{-*}) & \text{if
$
    \lambda=(-1)^{n+1}$,} \\
    \check f( A
\Phi A^{-1})= \check
f( \Phi^{-*}) &
\text{if $
    \lambda=(-1)^{n}$}.
  \end{cases}
\end{align*}

Since the homomorphism
$[G_1,G_2]$ is
completely determined
by $G_1=f(\Phi)$ and
$\Phi$ has the upper
triangular form
\eqref{ntd} with
$\lambda=\pm 1$, we
conclude that the
mapping
$f(\Phi)\mapsto
f(\lambda)$ defines an
endomorphism of rings
$\End(\underline{\cal
A}_{\Phi})\to \mathbb
H$; its kernel is the
radical of
$\End(\underline{\cal
A}_{\Phi})$. Hence
$\mathbb T( {\cal
A}_{\Phi})$ can be
identified with
$\mathbb H$. The
involution on $\mathbb
T( {\cal A}_{\Phi})$
is induced by the
mapping $G_1\mapsto
G_2^*$; i.e., by
\[
f(\Phi)\mapsto
\begin{cases}
   \bar{f}( \Phi^{-1}) &
\text{if $
    \lambda=(-1)^{n+1}$,} \\
   \widehat{f}(
\Phi^{-1}) & \text{if
$ \lambda=(-1)^{n}$},
  \end{cases}
\]
in which the
involution
$h\mapsto\bar h$ on
$\mathbb F$ is either
quaternionic
conjugation \eqref{ne}
or quaternionic
semiconjugation
\eqref{nen}, and
$h\mapsto\widehat{h}$
denotes the other
involution \eqref{nen}
or \eqref{ne}. Thus
the involution on
$\mathbb T( {\cal
A}_{\Phi})$ is
$h\mapsto\bar h$ if
$\lambda=(-1)^{n+1}$
and is $h\mapsto
\widehat{h}$ if
$\lambda=(-1)^{n}$.
Due to (iii) in Lemma
\ref{le}(a), this
proves \eqref{kki} in
the case $\lambda= \pm
1$.
\medskip

It remains to prove
that the matrices
\eqref{gyo} and
\eqref{gyo1} can be
used instead of (iii)
in Theorem
\ref{t1.1}(e).

Let us prove this
statement for the
first matrix in
\eqref{gyo}. For each
unimodular
$\lambda\in\mathbb K$,
the *cosquare
\eqref{ntd} of
$A=\sqrt{\lambda
(-1)^{n+1}}\Gamma_n$
is similar to
$J_n(\lambda)$, so we
can replace
$J_n(\lambda)$ by
$\Phi$ in ${\cal
O}_{\mathbb H}$ and
conclude by Lemma
\ref{le}(a) that
$\varepsilon A$ can be
used instead of (iii)
in Theorem
\ref{t1.1}(e).

First, let the
involution on $\mathbb
H$ be quaternionic
conjugation. By
\eqref{kki} the matrix
$\varepsilon A$ is
\begin{equation}\label{su1a}
\text{either }\
i\Gamma_n,\ \text{ or
}\ \pm\mu\Gamma_n
\text{ with }
\mu:=\sqrt{\lambda(-1)
^{n+1}}\ne i.
\end{equation}
Since $\lambda$ is
determined up to
replacement by
$\bar{\lambda}$ and
$\sqrt{\lambda(-1)
^{n+1}}\ne i$, we can
take
$\lambda(-1)^{n+1}=u+vi\ne
-1$ with $v\ge 0$, and
obtain
$\mu=\sqrt{\lambda(-1)^{n+1}}=a+bi$
with $a> 0$ and $b\ge
0$. Replacing the
matrices
$-\mu\Gamma_n=(-a-bi)\Gamma_n$
in \eqref{su1a} by the
*congruent matrices
$\bar{j}\cdot(-a-bi)\Gamma_n\cdot
j=(-a+bi)\Gamma_n$, we
get the first matrix
in \eqref{gyo}.

Now let the involution
be quaternionic
semiconjugation. By
\eqref{kki} the matrix
$\varepsilon A$ is
\begin{equation}\label{su2}
\text{either \
$\Gamma_n$, \ or \
$\pm\mu\Gamma_n$ with
}
\mu:=\sqrt{\lambda(-1)
^{n+1}}\ne 1.
\end{equation}
In \eqref{su2} we can
take
$\lambda(-1)^{n+1}=u+vi\ne
1$ with $v\ge 0$. Then
$\mu=\sqrt{\lambda(-1)^{n+1}}=a+bi$
with $a\ge 0$ and $b>
0$. Replacing the
matrices
$-\mu\Gamma_n=(-a-bi)\Gamma_n$
in \eqref{su2} by the
*congruent matrices
$\bar{j}\cdot(-a-bi)\Gamma_n\cdot
j=(a-bi)\Gamma_n$
($\bar j=j$ since the
involution is
quaternionic
semiconjugation), we
get the first matrix
in \eqref{gyo}.

The same reasoning
applies to the second
matrix in \eqref{gyo}.

Let us prove that the
matrix \eqref{gyo1}
can be used instead of
(iii) in Theorem
\ref{t1.1}(e). By
\eqref{kdt},
$J_n(\lambda)$ with a
unimodular
$\lambda\in\mathbb K$
is similar to the
*cosquare of
$\sqrt{\lambda}\,
\Delta_n$ with
$\Delta_n:=\Delta_n(1)$.
Therefore,
$\varepsilon
\sqrt[\displaystyle
*]{ J_n (\lambda)}$ in
(iii) can be replaced
by $\varepsilon
\sqrt{\lambda}\,
\Delta_n$.

Suppose that either
the involution is
quaternionic
conjugation and $n$ is
odd, or that the
involution is
quaternionic
semiconjugation and
$n$ is even. Then
$\bar j=(-1)^{n}j$. By
\eqref{kki},
$\varepsilon =1$ if
$\lambda =-1$ and
$\varepsilon =\pm 1$
if $\lambda \ne -1$.
So each $\varepsilon
\sqrt{\lambda}\,
\Delta_n$ is either
$i\Delta_n$ or
$\pm\mu\Delta_n$, in
which
$\mu:=\sqrt{\lambda}$
and $\lambda=u+vi\ne
-1$. We can suppose
that $v\ge 0$ since
$\lambda$ is
determined up to
replacement by
$\bar{\lambda}$.
Because $\mu$ is
represented in the
form $a+bi$ with $a>
0$ and $b\ge 0$, the
equality
\begin{equation*}\label{hdu}
S_n\Delta_nS_n
=(-1)^n\Delta_n,
\qquad
S_n:=\diagg(j,-j,j,-j,\dots),
\end{equation*}
shows that we can
replace
$-\mu\Delta_n=(-a-bi)
\Delta_n$ by the
*congruent matrix
\[
S_n^*(-a-bi)\Delta_nS_n
=(-1)^n
S_n(-a-bi)\Delta_n
S_n=(-a+bi)\Delta_n
\]
and obtain the matrix
\eqref{gyo1}.

Now suppose that the
involution is
quaternionic
conjugation and $n$ be
even, or that the
involution is
quaternionic
semiconjugation and
$n$ is odd. Then $\bar
j=(-1)^{n+1}j$. By
\eqref{kki}, each
$\varepsilon
\sqrt{\lambda}\,
\Delta_n$ is either
$\Delta_n$ or
$\pm\mu\Delta_n$, in
which
$\mu:=\sqrt{\lambda}$
and $\lambda=u+vi\ne
1$ with $v\ge 0$.
Since $\mu$ is
represented in the
form $a+bi$ with $a\ge
0$ and $b>0$, we can
replace $-\mu\Delta_n
=(-a-bi)\Delta_n$ by
the *congruent matrix
\[
S_n^*(-a-bi)\Delta_nS_n
=(-1)^{n+1}
S_n(-a-bi)\Delta_n
S_n=(a-bi)\Delta_n
\]
and obtain the matrix
\eqref{gyo1}.


\begin{thebibliography}{99}
\parskip=-4pt

\bibitem{cor}
B. Corbas, G.D.
Williams, Bilinear
forms over an
algebraically closed
field, {\it J. Pure
Appl. Algebra} 165
(no. 3) (2001)
225--266.

\bibitem{der-wey}
H. Derksen, J. Weyman,
Generalized quivers
associated to
reductive groups,
\emph{Colloq. Math.}
94 (2002) 151--173.

\bibitem{dok_iso}
D.\v{Z}.
\raisebox{1.5pt}{-}\!\!Dokovi\'{c},
Structure of isometry
group of bilinear
spaces, \emph{Linear
Algebra Appl.} 416
(2006) 414--436.

\bibitem{dok-zha_jor}
D.\v{Z}.
\raisebox{1.5pt}{-}\!\!Dokovi\'{c},
K. Zhao, Jordan
decomposition of
bilinear forms,
\emph{Math. Res.
Lett.} 12 (2005)
341--356.

\bibitem{dok-zha_rat}
D.\v{Z}.
\raisebox{1.5pt}{-}\!\!Dokovi\'{c},
K. Zhao, Rational
Jordan decomposition
of bilinear forms,
\emph{Commun. Contemp.
Math.} 7 (no. 6)
(2005) 769--786.

\bibitem{gab2}
P. Gabriel, Appendix:
degenerate bilinear
forms, {\it J.
Algebra} 31 (1974)
67--72.

\bibitem{gab-roi}
P. Gabriel, A.V.
Roiter, {\it
Representations of
Finite-Dimensional
Algebras},
Springer-Verlag,
Berlin, 1997.

\bibitem{hor}
R.A. Horn, C.R.
Johnson, {\it Matrix
Analysis}, Cambridge
University Press, New
York, 1985.

\bibitem{hor-ser}
R.A. Horn, V.V.
Sergeichuk, Congruence
of a square matrix and
its transpose,
\emph{Linear Algebra
Appl.} 389 (2004)
347--353.


\bibitem{hor-ser_regul}
R.A. Horn, V.V.
Sergeichuk, A
regularization
algorithm for matrices
of bilinear and
sesquilinear forms,
\emph{Linear Algebra
Appl.} 412 (2006)
380--395.

\bibitem{hor-ser_can}
R.A. Horn, V.V.
Sergeichuk, Canonical
forms for complex
matrix congruence and
*congruence,
\emph{Linear Algebra
Appl.} 416 (2006)
1010--1032.

\bibitem{hor-ser_unit}
R.A. Horn, V.V.
Sergeichuk, Canonical forms for
unitary congruence and
*congruence, \emph{Linear Multilinear Algebra}, accepted for publication.


\bibitem{jac}
N. Jacobson, {\it The
Theory of Rings},
American Mathematical
Society, Providence,
1966.

\bibitem{kar}
M. Karow, Self-adjoint
operators and pairs of
Hermitian forms over
the quaternions,
\emph{Linear Algebra
Appl.} 299 (1999)
101--117.

\bibitem{len}
S. Lang, {\it
Algebra},
Addison-Wesley,
Reading, MA, 1965.

\bibitem{lee-wei}
J.M. Lee, D.A.
Weinberg, A note on
canonical forms for
matrix congruence,
\emph{Linear Algebra
Appl.} 249 (1996)
207--215.

\bibitem{que-scha}
H.-G. Quebbermann, W.
Scharlau, M. Schulte,
Quadratic and
Hermitian forms in
additive and abelian
categories, {\it J.
Algebra} 59 (1979)
264--289.

\bibitem{rie}
C. Riehm, The
equivalence of
bilinear forms, {\it
J. Algebra} 31 (1974)
45--66.

\bibitem{rie1}
C. Riehm, M.
Shrader-Frechette, The
equivalence of
sesquilinear forms,
{\it J. Algebra} 42
(1976) 495--530.

\bibitem{rod_pair_nonst}
L. Rodman, Canonical forms for mixed symmetric-skewsymmetric quaternion matrix pencils, \emph{Linear Algebra Appl.} 424 (2007) 184--221.

\bibitem{roi}
A.V. Roiter, Bocses
with involution, in:
{\it Representations
and quadratic forms},
Akad. Nauk Ukrain.
SSR, Inst. Mat., Kiev,
1979, 124--128 (in
Russian).

\bibitem{sch}
R. Scharlau, Zur
Klassification von
Bilineaformen und von
Isometrien \"{u}ber
K\"{o}rpern, {\it
Math. Z.} 178 (1981)
359--373.

\bibitem{w.schar}
W. Scharlau, {\it
Quadratic and
Hermitian Forms},
Springer-Verlag,
Berlin, 1985.

\bibitem{ser_first}
V.V. Sergeichuk,
Representations of
simple involutive
quivers, in:
\emph{Representations
and quadratic forms},
Akad. Nauk Ukrain.
SSR, Inst. Mat., Kiev,
1979, 127--148 (in
Russian).

\bibitem{ser_disch}
V.V. Sergeichuk,
Representation of
oriented schemes, in:
\emph{Linear algebra
and the theory of
representations},
Akad. Nauk Ukrain.
SSR, Inst. Mat., Kiev,
1983, 110--134 (in
Russian).

\bibitem{ser0}
V.V. Sergeichuk, The
canonical form of the
matrix of a bilinear
form over an
algebraically closed
field of
characteristic 2, {\it
Math. Notes} 41 (1987)
441--445.

\bibitem{ser_izv}
V.V. Sergeichuk,
Classification
problems  for systems
of forms and linear
mappings, {\it  Math.
USSR-Izv.} 31 (1988)
481--501.

\bibitem{ser_quat}
V.V. Sergeichuk,
Classification of
sesquilinear forms,
pairs of hermitian
forms, self-conjugate
and isometric
operators over the
division ring of
quaternions, {\it
Math. Notes} 49 (1991)
409--414.

\bibitem{ser_sym}
V.V. Sergeichuk,
Symmetric
representations of
algebras with
involution,
\emph{Math. Notes} 50
(1991) 1058--1061.

\bibitem{ser_iso}
V.V. Sergeichuk,
Canonical matrices of
isometric operators on
indefinite inner
product spaces, \emph{Linear Algebra Appl.} (2007), doi: 10.1016/j.laa.2007.08.016.

\bibitem{shme}
D.A. Shmelkin, Signed
quivers, symmetric
quivers and root
systems, \emph{J.
London Math. Soc.} (2)
73 (2006) 586--606.


\bibitem{thom}
R.C. Thompson, Pencils
of complex and real
symmetric and skew
matrices, {\it Linear
Algebra Appl.} 147
(1991) 323--371.

\bibitem{wan}
B.L. van der Waerden,
{\it Algebra},
Springer-Verlag, New
York, 1991.

\bibitem{wil}
J. Williamson, On the
algebraic problem
concerning the normal
forms of linear
dynamical systems,
\emph{Amer. J. Math.}
58 (1936) 141--163.
\end{thebibliography}
\end{document}